\def\opn#1#2{\def#1{\operatorname{#2}}} % to make operators
\opn\chara{char} \opn\length{\ell} \opn\pd{pd} \opn\rk{rk}
\opn\projdim{proj\,dim} \opn\injdim{inj\,dim} \opn\rank{rank}
\opn\depth{depth} \opn\sdepth{sdepth} \opn\hdepth{hdepth}
\opn\grade{grade} \opn\height{height} \opn\embdim{emb\,dim}
\opn\codim{codim}  \opn\min{min} \opn\max{max}
\opn\Tr{Tr} \opn\bigrank{big\,rank}
\opn\superheight{superheight}\opn\lcm{lcm}
\opn\trdeg{tr\,deg}
\opn\reg{reg} \opn\lreg{lreg} \opn\ini{in} \opn\lpd{lpd}
\opn\size{size}
\opn\Spec{Spec} \opn\Supp{Supp} \opn\supp{supp} \opn\Sing{Sing}
\opn\Ass{Ass} \opn\Min{Min}
\opn\Ann{Ann} \opn\Rad{Rad} \opn\Soc{Soc}
\opn\Im{Im} \opn\Ker{Ker} \opn\Coker{Coker} \opn\Am{Am}
\opn\Hom{Hom} \opn\Tor{Tor} \opn\Ext{Ext} \opn\End{End}
\opn\Aut{Aut} \opn\id{id}  \opn\deg{deg}
\opn\nat{nat}
\opn\pff{pf}%   \pf exists already
\opn\Pf{Pf} \opn\GL{GL} \opn\SL{SL} \opn\mod{mod} \opn\ord{ord}
\opn\Gin{Gin} \opn\Hilb{Hilb}
\let\iso=\cong
\let\to=\rightarrow
\def\Implies{\ifmmode\Longrightarrow \else
        \unskip${}\Longrightarrow{}$\ignorespaces\fi}
\def\implies{\ifmmode\Rightarrow \else
        \unskip${}\Rightarrow{}$\ignorespaces\fi}
\def\iff{\ifmmode\Longleftrightarrow \else
        \unskip${}\Longleftrightarrow{}$\ignorespaces\fi}
\newtheorem{Theorem}{Theorem}
\newtheorem{Lemma}{Lemma}
\newtheorem{Example}{Example}
\newtheorem{Conjecture}{Conjecture}
\let\epsilon\varepsilon
\let\phi=\varphi
\let\kappa=\varkappa
\def\qed{\ifhmode\textqed\fi
      \ifmmode\ifinner\quad\qedsymbol\else\dispqed\fi\fi}
\def\textqed{\unskip\nobreak\penalty50
       \hskip2em\hbox{}\nobreak\hfil\qedsymbol
       \parfillskip=0pt \finalhyphendemerits=0}
\def\dispqed{\rlap{\qquad\qedsymbol}}
\opn\dis{dis}
\def\pnt{{\raise0.5mm\hbox{\large\bf.}}}
\opn\Lex{Lex}
\begin{document}

\title{Four generated, squarefree,  monomial  ideals}
% Use \titlerunning{Short Title} for an abbreviated version of
% your contribution title if the original one is too long
\author{Adrian Popescu}
\address{Adrian Popescu,  Department of Mathematics, University of Kaiserslautern, Erwin-Schr\"odinger-Str., 67663 Kaiserslautern, Germany}
\email{popescu@mathematik.uni-kl.de}
% Use \authorrunning{Short Title} for an abbreviated version of
% your contribution title if the original one is too long
\author{Dorin Popescu}
\address{Dorin Popescu,  Simion Stoilow Institute of Mathematics of Romanian Academy, Research unit 5, P.O.Box 1-764, Bucharest 014700, Romania}
\email{dorin.popescu@imar.ro}
\thanks{The  support of the first author from the Department of Mathematics of the University of Kaiserslautern and the support of the second author from  grant  PN-II-RU-TE-2012-3-0161  of Romanian Ministry of Education, Research and Innovation are gratefully acknowledged.}
%
% Use the package "url.sty" to avoid
% problems with special characters
% used in your e-mail or web address
%

\begin{abstract}
$I\supsetneq J$ be  two  squarefree monomial ideals of a polynomial algebra over a field generated in degree $\geq d$, resp. $\geq d+1$ .  Suppose that  $I$ is either generated  by three monomials of degrees $d$ and a set of monomials of degrees $\geq d+1$, or by four special monomials of degrees $d$. If  the Stanley depth of $I/J$ is $\leq d+1$ then the usual depth of $I/J$ is $\leq d+1$ too.

Monomial Ideals,  Depth, Stanley depth.

2010 Mathematics Subject Classification: Primary 13C15, Secondary 13F20, 13F55,
13P10.
\end{abstract}

\maketitle

\section*{Introduction}
Let $K$ be a field and $S=K[x_1,\ldots,x_n]$ be the polynomial $K$-algebra in $n$ variables. Let  $I\supsetneq J$ be  two   squarefree monomial ideals of $S$ and suppose that  $I$ is generated by squarefree monomials of degrees $\geq d$   for some positive integer $d$.  After  a multigraded isomorphism we may assume either that $J=0$, or $J$ is generated in degrees $\geq d+1$.
  By \cite[Proposition 3.1]{HVZ} (see  \cite[Lemma 1.1]{P}) we have $\depth_S I/J\geq d$.
  Depth of $I/J$ is a homological invariant and  depends on the characteristic of the field $K$.

  The purpose of our paper is to study upper bound conditions for $\depth_SI/J$. Let  $B$ (resp. $C$) be the set of the squarefree monomials of degrees $d+1$  (resp. $d+2$) of $I\setminus J$. Suppose that $I$ is generated by some squarefree monomials $f_1,\ldots,f_r$ of degrees $ d$  for some $d\in {\mathbb N}$ and a set of squarefree monomials $E$ of degree $\geq d+1$. If $d=1$ and each monomial of $B\setminus E$ is the least common multiple of two $f_i$ then it is easy to show that $\depth_SI/J=1$ (see Lemma \ref{1}). Trying to extend this result for $d>1$ we find an obstruction given by Example \ref{ex}. Our extension
given by Lemma \ref{e} is just  a  special form, but  a natural condition seems to be given in terms of the Stanley depth.

   More precisely, let $P_{I\setminus J}$  be the poset of all squarefree monomials of $I\setminus J$  with the order given by the divisibility. Let $ P$ be a partition of  $P_{I\setminus J}$ in intervals $[u,v]=\{w\in  P_{I\setminus J}: u|w, w|v\}$, let us say   $P_{I\setminus J}=\cup_i [u_i,v_i]$, the union being disjoint.
Define $\sdepth  P=\min_i\deg v_i$ and  the  {\em Stanley depth} of $I/J$ given by $\sdepth_SI/J=\max_{ P} \sdepth  P$, where $ P$ runs in the set of all partitions of $P_{I\setminus J}$ (see  \cite{HVZ}, \cite{S}).
 Stanley's Conjecture says that  $\sdepth_S I/J\geq \depth_S I/J$. The Stanley  depth of $I/J$ is a combinatorial invariant and  does not depend on the characteristic of the field $K$. Stanley's Conjecture holds when $J=0$ and $I$ is an intersection of four monomial prime ideals by \cite{AP}, \cite{Po}, or $I$ is such that the sum of every three different of its minimal prime ideals is a constant ideal by \cite{P0} (see also \cite{P2}), or $I$ is  an intersection of three monomial primary ideals by \cite{Z}, or a monomial almost complete intersection by \cite{Ci}.

 \begin{Theorem} (D. Popescu \cite[Theorem 4.3]{P})\label{P} If $\sdepth_SI/J=d$ then  $\depth_SI/J=d$, that is Stanley's Conjecture holds in this case.
\end{Theorem}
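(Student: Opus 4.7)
The hypothesis $\sdepth_S I/J = d$ combined with $\depth_S I/J \geq d$ (from \cite[Proposition 3.1]{HVZ}, quoted in the introduction) reduces the theorem to the single inequality $\depth_S I/J \leq d$. I would prove this by contradiction: assume $\depth_S I/J \geq d+1$ and produce a partition of $P_{I\setminus J}$ with $\min_i \deg v_i \geq d+1$, contradicting the Stanley-depth hypothesis.

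First unpack $\sdepth_S I/J = d$ combinatorially. Every monomial in $P_{I\setminus J}$ has degree $\geq d$, so any partition $P = \bigcup_i [u_i, v_i]$ automatically satisfies $\min_i \deg v_i \geq d$, and $\sdepth_S I/J \geq d$ is free. The equality $\sdepth_S I/J = d$ therefore says precisely that every partition must contain at least one interval with $\deg v_i = d$; such an interval is forced to be a singleton $\{f\}$, where $f$ is a squarefree degree-$d$ generator of $I$ with $f \notin J$. To refute this under $\depth_S I/J \geq d+1$ it suffices to exhibit one partition in which every such $f$ is the lower endpoint of a non-trivial interval $[f, x_j f]$ with $x_j f \in I\setminus J$ of degree $d+1$.

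My plan for the construction is induction on $n$, driven by the depth lemma. Pick a variable $x_i$ and split $P_{I\setminus J}$ into the monomials divisible by $x_i$ and those not divisible by $x_i$; this mirrors the standard short exact sequences relating $I/J$ to its reductions $I/(J,x_i)$ and $(J : x_i)/J$. The hypothesis $\depth_S I/J \geq d+1$ propagates through these sequences (via the depth lemma) to give the corresponding bound on each piece after reduction modulo $x_i$, and the inductive hypothesis on $n$ supplies partitions of each piece with $\sdepth \geq d+1$. Attaching every remaining degree-$d$ generator $f$ to an available partner $x_j f \in P_{I \setminus J}$ then assembles these into a global partition of $P_{I\setminus J}$ with $\sdepth \geq d+1$, giving the contradiction.

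The hard step will be the recombination: the pairings $[f, x_j f]$ assigned to degree-$d$ generators on the two sides of the split must not collide with the inductively chosen intervals, nor with one another. I expect the crux of the proof in \cite[Theorem 4.3]{P} to be a careful choice of $x_i$ — presumably guided by the associated primes of $S/J$ or by the combinatorics of the degree-$d$ generators of $I$ — that both preserves $\depth \geq d+1$ on the reduced pieces and leaves enough unused partners in $P_{I\setminus J}$ so that every degree-$d$ generator can be simultaneously matched. This simultaneous matching, rather than the induction itself, is where the real work lies and where the squarefree hypothesis is most likely to play a decisive role.
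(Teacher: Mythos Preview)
This theorem is not proved in the present paper: it is quoted, with attribution, as \cite[Theorem~4.3]{P} (D.~Popescu, \emph{Depth of factors of square free monomial ideals}). The paper uses it as a black box in the proof of Theorem~\ref{m0} and in Example~\ref{bad}, but supplies no argument for it. Consequently there is no proof here to compare your proposal against.

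As to the proposal itself: what you have written is a plausible outline --- the reduction to constructing a partition with $\sdepth\geq d+1$ under the assumption $\depth_S I/J\geq d+1$ is the natural strategy, and the $x_i$-split combined with induction on $n$ is the standard engine in this area. But you have correctly identified, and left open, the only genuinely nontrivial step: the simultaneous matching of all degree-$d$ generators to degree-$(d+1)$ partners without collisions with the inductively obtained intervals. This is not a technicality; it is the whole proof. Your sketch does not explain why such a matching exists, nor why a single choice of $x_i$ suffices to make both halves of the split inherit the depth bound. So as written this is a statement of intent rather than a proof, and the actual argument in \cite{P} must be consulted for the missing mechanism.
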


Next step in the study of Stanley's Conjecture is  to show the following weaker conjecture.

\begin{Conjecture} \label{c}   Suppose that $I \subset S$ is minimally generated by some squarefree monomials $f_1,\ldots,f_r$ of degrees $d$,  and a  set $E$  of squarefree monomials of degrees $\geq d+1$. If  $\sdepth_S I/J=d+1$ then $\depth_S I/J \leq d+1$.
\end{Conjecture}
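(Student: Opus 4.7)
My plan is to prove Conjecture \ref{c} by induction on $r$, using Theorem \ref{P} as the base case of a ``ladder'' argument. Theorem \ref{P} settles $\sdepth = d \Rightarrow \depth = d$, and Conjecture \ref{c} asks us to propagate the implication one rung higher, to $\sdepth = d+1 \Rightarrow \depth \leq d+1$. For $r = 0$, the hypothesis is that $I$ is generated only in degrees $\geq d+1$, and the claim reduces to Theorem \ref{P} applied with $d+1$ in place of $d$. So the induction is genuinely on the number $r$ of minimal degree-$d$ generators.

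Before starting, I carry out the standard reductions: assume either $J = 0$ or $J$ is generated in degrees $\geq d+1$, and discard any variable that divides no generator of $I$. The main technical tool is the short exact sequence obtained by peeling off one degree-$d$ generator,
$$0 \to (I', J)/J \to I/J \to I/(I' + J) \to 0, \qquad I' := (f_1, \ldots, f_{r-1}) + E.$$
The right-hand term is cyclic: $I/(I' + J) \cong (S/K)(-d)$ for $K := (I' + J) : f_r$, and $K$ is a squarefree monomial ideal whose minimal generators are variables $x_i$ (whenever $f_r x_i \in (I', J)$) together with higher-degree squarefree monomials inherited from $J$. A direct analysis would compute $\depth_S I/(I' + J)$ in terms of the support pattern of $\{f_1, \ldots, f_{r-1}\} \cup E$ on the complement of $\supp f_r$.

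Arguing by contraposition, assume $\depth_S I/J \geq d+2$. I would combine the Depth Lemma with the explicit computation of $\depth_S I/(I' + J)$ to force both $\depth_S (I', J)/J \geq d+2$ and $\depth_S I/(I' + J) \geq d+2$. The inductive hypothesis applied to $(I', J)/J$ — which has $r-1$ generators in degree $d$ — then yields $\sdepth_S (I', J)/J \geq d+2$ by the ladder: since $\depth \geq d+2$ rules out both $\sdepth = d$ (by Theorem \ref{P}) and $\sdepth = d+1$ (by the inductive hypothesis), and since the general lower bound gives $\sdepth \geq d$, we must have $\sdepth \geq d+2$. The cyclic nature of $I/(I' + J)$ provides a Stanley partition of its poset of depth $\geq d+2$ directly. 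Concatenating the two partitions produces a partition of $P_{I \setminus J}$ of depth $\geq d+2$, contradicting $\sdepth_S I/J = d+1$.

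The main obstacle is the analysis of the colon ideal $K = (I' + J) : f_r$: its structure is dictated by the pattern of pairwise and three-fold lcms among $f_1, \ldots, f_r$, and this branches combinatorially as $r$ grows. The paper succeeds for $r = 3$ (with arbitrary $E$) and $r = 4$ under a ``special'' constraint controlling three-fold lcms precisely because in these ranges the possible configurations of $K$ are few enough to enumerate by hand, and the partition-concatenation step reduces to Lemma \ref{1}-type arguments. For general $r$, a uniform inductive invariant — perhaps based on the divisibility lattice generated by the $f_i$, or a refined antichain notion — seems necessary to tame this branching, and its absence is the principal reason the conjecture remains open beyond small $r$.
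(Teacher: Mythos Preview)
The statement is a \emph{conjecture} in the paper, not a theorem; the paper proves only the cases $r\leq 3$ (Theorem~\ref{m}) and $r=4$ under an extra hypothesis (Theorem~\ref{m1}). Your proposal sketches an inductive strategy for the full conjecture but contains a genuine gap that is not merely the combinatorial branching you flag at the end.

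The contrapositive reformulation --- assume $\depth_S I/J\geq d+2$ and build a partition of sdepth $\geq d+2$ --- reverses the direction of the implication into Stanley's conjecture itself. For the submodule $(I'+J)/J$ your ladder argument is legitimate: Theorem~\ref{P} plus the inductive hypothesis do rule out $\sdepth\in\{d,d+1\}$ once you know $\depth\geq d+2$. But for the cyclic quotient $I/(I'+J)\cong (S/K)(-d)$ the sentence ``the cyclic nature provides a Stanley partition of depth $\geq d+2$ directly'' is unjustified. A cyclic squarefree quotient $S/K$ has no a priori lower bound on its Stanley depth; deducing $\sdepth_S S/K\geq d+2$ from $\depth_S S/K\geq d+2$ is exactly an instance of Stanley's conjecture for $S/K$, which is not covered by your induction (it has a different $d$ and a different $r$) and was open when the paper was written. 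There is also a secondary gap: the Depth Lemma alone does not force $\depth_S I/(I'+J)\geq d+2$ from $\depth_S I/J\geq d+2$; you would need independent control of $\depth_S S/K$, which you defer to an ``explicit computation'' that is never carried out.

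The paper's approach for $r\leq 3$ is structurally different and avoids ever needing an $\sdepth\geq\depth$ statement for a piece. It works \emph{forward} from the hypothesis $\sdepth_S I/J=d+1$: it removes a carefully chosen $b\in B$ rather than a generator $f_r$, sets $I_b=(f_2,\ldots,f_r,B\setminus\{b\})$, and if $\sdepth_S I_b/J_b\geq d+2$ takes an actual partition $P_b$ witnessing this. A combinatorial ``path'' analysis of $P_b$ (Lemmas~\ref{ml1},~\ref{ml2}) then either extends $P_b$ to a partition of $I/J$ of sdepth $\geq d+2$ (contradiction), or locates a proper subideal $I'\subsetneq I$ with $\sdepth_S I'/J'\leq d+1$ and $\depth_S I/(J,I')\geq d+1$, so that induction on $r$ and $|E|$ plus the Depth Lemma finish. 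The point is that the paper only ever needs the direction ``low sdepth $\Rightarrow$ low depth'' on the pieces, which is exactly what the induction supplies; your scheme requires the opposite direction on the cyclic piece, and that is where it breaks.
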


Set $s=|B|$, $q=|C|$. In the study of the above conjecture very useful seem to be the following two particular results of \cite[Theorem 1.3]{P1} and  \cite[Theorem 2.4]{Sh}.

\begin{Theorem} (D. Popescu)\label{po} If $s>q+r$ then $\depth_SI/J\leq d+1$.
\end{Theorem}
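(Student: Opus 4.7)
I would argue the contrapositive: assume $\depth_S I/J \geq d+2$ and deduce $s \leq q+r$. The proof has two parts. The first (and harder) part is to produce, from the depth hypothesis, a partition of the poset $P_{I\setminus J}$ into intervals $[u_\ell, v_\ell]$ with $\deg v_\ell \geq d+2$ for every $\ell$. Once such a partition is in hand, a purely combinatorial counting argument on its intervals, refined by the hypothesis that $I$ has exactly $r$ generators of degree $d$, yields the required inequality.

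For the counting step, observe first that the minimal elements of $P_{I\setminus J}$ (for the divisibility order) are exactly $f_1,\dots,f_r$, because $I$ is generated in degree $\geq d$ and these are the only squarefree monomials of degree $d$ in $I\setminus J$. Consequently, in the partition each $f_i$ must be the bottom of a unique interval $[f_i,v_i]$. Set $k_i:=\deg v_i-d\geq 2$. Then $[f_i,v_i]$ contains exactly $k_i=\binom{k_i}{1}$ squarefree monomials of degree $d+1$ and $\binom{k_i}{2}$ of degree $d+2$. Let $\beta$ denote the number of intervals in the partition whose bottom has degree $d+1$ (and whose top has degree $\geq d+2$): each such interval contributes $1$ to the count of degree $d+1$ elements and at least $1$ to the count of degree $d+2$ elements. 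Intervals with bottom of degree $\geq d+2$ contribute nothing to $s$. Totaling gives
\[
s=\sum_{i=1}^{r}k_i+\beta,\qquad q\geq\sum_{i=1}^{r}\binom{k_i}{2}+\beta.
\]
Using the elementary inequality $\binom{k_i}{2}\geq k_i-1$ for every $k_i\geq 2$, we obtain
\[
q\geq\sum_{i=1}^{r}(k_i-1)+\beta=s-r,
\]
i.e.\ $s\leq q+r$, contradicting the hypothesis $s>q+r$.

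The main obstacle is the first part, namely passing from the depth assumption to the existence of a partition with all tops of degree $\geq d+2$. This is essentially a Stanley-conjecture-type statement in the range $d+2$, which is not known in general. To produce such a partition I would proceed by induction, either on $n$ or on an invariant measuring the complexity of $(I,J)$. The inductive step would exploit the squarefree structure: pick a variable $x_i$ and split $I/J$ via a short exact sequence of the form
\[
0\to (J:x_i)\cap I/J\to I/J\to I/(J,x_iI)\to 0
\]
(or the analogous multigraded sequence separating the monomials containing $x_i$ from those not containing $x_i$), using that depth behaves well along such sequences. In each summand the ambient polynomial ring has fewer variables and the depth bound $\geq d+2$ is inherited (appropriately normalized), so partitions of the two pieces can be glued into a partition of $P_{I\setminus J}$ with $\deg v_\ell\geq d+2$ throughout. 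The combinatorial conclusion above then finishes the argument.
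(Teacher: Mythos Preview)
Your counting argument in the second part is correct and is exactly the easy observation, noted in the paper, that a partition with all tops of degree $\geq d+2$ forces $s\leq q+r$. The genuine gap is the first part. What you are attempting there is precisely the implication $\depth_S I/J\geq d+2 \Rightarrow \sdepth_S I/J\geq d+2$, i.e.\ a case of Stanley's Conjecture, which is open in this generality; indeed the whole paper is devoted to the much weaker implication $\sdepth_S I/J\leq d+1 \Rightarrow \depth_S I/J\leq d+1$ for small $r$ only. Your inductive sketch does not circumvent this: in an exact sequence of the type you write, the Depth Lemma does not guarantee that \emph{both} end terms inherit depth $\geq d+2$ (only one inequality goes the right way), so the induction hypothesis is not available on both pieces, and even when partitions of the pieces exist, gluing them to a partition of $P_{I\setminus J}$ with the required degree bound is not automatic.

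The paper's proof of Theorem~\ref{po} (given in \cite{P1}) avoids Stanley depth entirely: it is a direct homological argument using Koszul homology. The paper also points out that Shen's alternative proof via Hilbert depth is short and elementary. Either route gives $\depth_S I/J\leq d+1$ directly from the numerical inequality $s>q+r$, without ever constructing a partition of $P_{I\setminus J}$. You should replace your first part by one of these approaches; once you do, your counting argument becomes unnecessary, since it is the converse (easy) direction.
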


\begin{Theorem} (Y. Shen) \label{sh} If $s<2r$ then  $\depth_SI/J\leq d+1$.
\end{Theorem}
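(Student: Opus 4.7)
\medskip

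\noindent \textbf{Proof plan.} I would argue by contradiction: assume $\depth_S I/J \geq d+2$, and aim to derive $s \geq 2r$, contradicting the hypothesis. For each degree-$d$ generator $f_i$ of $I$, introduce its set of upward neighbours
\[
N_i := \{x_k f_i : x_k \nmid f_i,\ x_k f_i \in I\setminus J\} \subseteq B.
\]
The heart of the argument is the local claim that, under $\depth_S I/J \geq d+2$, one must have $|N_i| \geq 2$ for every $i$. To establish this, I would localise at the monomial prime $\wp_i = (x_k : x_k \nmid f_i)$ of height $n-d$ and invoke Hochster's formula (or a direct Koszul-cocycle construction) for the local cohomology of a squarefree module. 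The vanishing $H^{d+1}_{\mathfrak{m}}(I/J) = 0$ forced by the depth hypothesis excludes the cases $|N_i|=0$ and $|N_i|=1$: in the former, $f_i$ itself would represent a nontrivial class of degree $d$; in the latter, the pair $\{f_i, x_k f_i\}$ would give a nontrivial class in degree $d+1$.

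It then remains to compare $\sum_i |N_i| \geq 2r$ with $s = |B|$. Since $B \supseteq \bigcup_i N_i$, the deficit $\sum_i |N_i| - s$ equals the number of coincidences $x_k f_i = x_l f_j$ with $i \neq j$, each of which encodes a Koszul-type syzygy in degree $d+1$ between $f_i$ and $f_j$. To finish, I would show that in the regime $\depth \geq d+2$ every such coincidence is compensated by an extra element of $B$ coming either from $E$ or from a further application of the local claim to a refined link, so that the final tally still yields $s \geq 2r$. Combinatorially, I would organise the bookkeeping via the graph $G$ on $\{1,\ldots,r\}$ whose edges are the coincidences, and argue by induction on the connected components of $G$, showing that each component of size $t$ contributes at least $2t$ elements to $B$.

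The main obstacle I expect is this last combinatorial step. It is not enough to know that each $f_i$ has two upward neighbours in $B$; one has to track how these neighbours are shared between generators and re-invoke the depth hypothesis on successively smaller subposets of $P_{I\setminus J}$. Controlling this higher coincidence accounting---so that shared neighbours are genuinely forced to be matched by new, distinct squarefree monomials and no hidden double-counting occurs---seems to be the technical core of the argument, and is where I would expect the interaction between the Koszul syzygies in degree $d+1$, the set $C$ of degree $d+2$ elements, and Theorem \ref{P} (applied inductively after passing to $I/J$ modulo suitable variables) to enter decisively.
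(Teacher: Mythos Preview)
The paper does not give its own proof of this statement; it is quoted from Shen \cite{Sh}, with the remark that Shen's argument is short and uses the Hilbert depth of \cite{BKU}, \cite{U}. That route is essentially a two-line Hilbert-function count. One has $\depth_S M \leq \hdepth_1 M$ in general (a linear regular sequence of length $e=\depth M$ exhibits $M$ as a free module over a polynomial subring in $e$ variables). Now $\dim_K(I/J)_d = r$, while $\dim_K(I/J)_{d+1} = dr + s$: the $dr$ non-squarefree monomials $x_kf_i$ with $x_k\mid f_i$ are pairwise distinct and never lie in $J$, and the squarefree ones are exactly $B$. If $\hdepth_1(I/J)\geq d+2$, a graded decomposition $I/J\cong\bigoplus_\alpha K[Z_\alpha](-s_\alpha)$ with all $|Z_\alpha|\geq d+2$ has exactly $r$ summands with $s_\alpha=d$, each contributing at least $d+2$ in degree $d+1$; hence $dr+s\geq r(d+2)$, i.e.\ $s\geq 2r$. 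Contrapositively, $s<2r$ forces $\depth_S I/J\leq\hdepth_1(I/J)\leq d+1$.

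Your local claim $|N_i|\geq 2$ is correct: if $|N_i|\leq 1$ one exhibits an associated prime $P$ of $I/J$ with $\dim S/P\leq d+1$, exactly as you indicate. The gap is the passage from $\sum_i|N_i|\geq 2r$ to $|B|\geq 2r$. The sets $N_i$ genuinely overlap at the $w_{ij}$, and nothing in your argument controls that overlap. Your proposed compensation (``each coincidence is matched by an extra element of $B$'') is not established, and the component-by-component induction you sketch would need to re-invoke the depth hypothesis in a form you have not made precise; appealing to Theorem~\ref{P} here is both overkill and out of order, since that result is considerably harder than the one under discussion. The Hilbert-depth argument sidesteps all of this: because the decomposition is a \emph{direct} sum, the $d+2$ degree-$(d{+}1)$ elements contributed by each of the $r$ bottom summands are automatically linearly independent, so no overlap bookkeeping is ever required.
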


 These results were hinted by Stanley's Conjecture since it is obvious that $s>q+r$, or $s<2r$ imply   $\sdepth_SI/J\leq d+1$. The proof of Theorem \ref{po} uses Koszul homology (see \cite[Section 1.6]{BH}). Shen's proof of the above theorem as well of Theorem \ref{po} is easy and uses  the Hilbert depth considered by Bruns-Krattenhaler-Uliczka \cite{BKU} (see also \cite{U}, \cite{IM}).

 An equivalent definition for the Stanley depth is: $$\sdepth (M) = \max\{\sdepth \mathcal D\ |\ \mathcal D \textnormal{ is a Stanley decomposition of } M\},$$ where a Stanley decomposition of a $\mathbb Z-$graded (resp. $\mathbb Z^n-$graded) $S-module$ $M$ is $\mathcal D = (S_i, u_i)_{i\in I}$, where $u_i$ are homogenous elements of $M$ and $S_i$ are graded (resp. $\mathbb Z^n-$graded) $K-$algebra retracts of $S$ and $S_i \cap \Ann(u_i) = 0$ such that $M = \oplus_i S_i u_i $; and $\sdepth \mathcal D$ is the $\depth$ of the $S-$module $\oplus_i S_i u_i$. A more general concept is the one of Hilbert depth of a $\mathbb Z-$graded module $M$, denoted by $\hdepth_1 (M)$. Instead of considering equality, we only assume that $M \iso \oplus S_i (-s_i)$, where $s_i \in \mathbb Z$. One can also construct $\hdepth_n$ analogously if $M$ is a multigraded (that is $\mathbb Z^n)$ module.

 In \cite{AP1} is presented (and implemented) an algorithm that computes $\hdepth_1 (M)$ based on a Theorem of Uliczka \cite{U}; and in \cite{IZ} was presented an algorithm that computes $\hdepth_n (M)$. Meanwhile, another algorithm that computes $\hdepth_1$ and more was given in \cite{BMU}. \cite[Proposition 1.9]{AP1} gives a partial answer to a question of Herzog asking whether $\sdepth m = \sdepth (S \oplus m)$, where $m$ is the graded maximal ideal of $S$. More precisely, for $n \in \{1,2,3,4,5,7,9,11\}$ one obtains $\hdepth_1 m = \hdepth_1 (S \oplus m)$, which gives $\sdepth m = \sdepth (S \oplus m)$ (again Hilbert depth helps the study of Stanley depth). For  $n=6$ we have $\hdepth_1 m \neq \hdepth_1 (S \oplus m)$, which  means that in general Herzog's question could have  a negative answer. Later Ichim and Zarojanu checked the case $n=6$ and found indeed a counterexample to Herzog's question, which will be included in the new version of \cite{IZ}.

An important step in proving Conjecture \ref{c} is the following theorem.
\begin{Theorem} (D. Popescu-A. Zarojanu \cite{PZ1}, \cite[Theorem 1.5]{PZ2}) \label{pz} Conjecture \ref{c} holds  in each of the following two  cases:
\begin{enumerate}
\item{} $r= 1$,
\item{} $1<r\leq 3$, $E=\emptyset$.
\end{enumerate}
\end{Theorem}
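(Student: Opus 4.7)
The plan is to combine the three depth bounds already available — Shen's inequality $s<2r \Rightarrow \depth_S I/J\leq d+1$ (Theorem \ref{sh}), the companion bound $s>q+r \Rightarrow \depth_S I/J\leq d+1$ (Theorem \ref{po}), and Popescu's equality $\sdepth_S I/J=d \iff \depth_S I/J=d$ (Theorem \ref{P}) — with an explicit analysis of a Stanley partition of $P_{I\setminus J}$ realizing $\sdepth_S I/J=d+1$. The first simplification is that we may assume $2r\le s\le q+r$, since otherwise one of Theorems \ref{po}, \ref{sh} already finishes the argument. After this reduction, the degree-$d+1$ squarefree monomials of $I\setminus J$ occupy a very restricted combinatorial position above the $f_i$'s, which is what makes the few remaining cases tractable.

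For case (1), $r=1$, I would assume $f_1=x_1\cdots x_d$ and work inductively on the number of variables $n$, treating $E$ via the short exact sequence
\[
0\To I'/(J\cap I')\To I/J\To I/(I'+J)\To 0,
\]
where $I'$ is obtained from $I$ by removing either the generator $f_1$ or a suitably chosen variable. When $\sdepth_S I/J=d+1$ the prescribed partition forces each interval containing $f_1$ to have an upper end of degree exactly $d+1$, namely of the form $[f_1,f_1 x_j]$; this constraint is what lets one split off a subcomplex whose Stanley depth is $d$, and then Theorem \ref{P} collapses its depth to $d$. Feeding this back into the long exact Tor sequence gives $\depth_S I/J\le d+1$.

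For case (2), $1<r\le 3$ with $E=\emptyset$, I would enumerate the possible overlap patterns of the $f_i$'s of degree $d$, classified by the multidegrees of $\lcm(f_i,f_j)$ and $\lcm(f_1,f_2,f_3)$. The bounds $2r\le s\le q+r$ eliminate most configurations: for $r=2$ one is left with a handful of cases where $B\setminus E$ essentially consists of the degree $d+1$ multiples of $f_1$ and $f_2$, and for $r=3$ the constraint $s\le q+r$ forces $\lcm(f_i,f_j)$ to sit in $C$ for each pair. In each surviving case one can write down a variable-splitting short exact sequence of the type used in \cite{P1, Sh}, or equivalently a Koszul-homology argument as in \cite[Section 1.6]{BH}, which exhibits an explicit element in $H_{n-d-1}$ of the Koszul complex on $x_1,\ldots,x_n$ with values in $I/J$, giving $\depth_S I/J\le d+1$.

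The main obstacle I expect is the precise case analysis in part (2) when $r=3$ and the three generators have pairwise intersections of different sizes; bookkeeping the intervals $[f_i,v]$ with $\deg v=d+1$ in a Stanley partition, and checking that some configurations can indeed be excluded by the combined constraints $s\ge 2r$, $s\le q+r$, and $\sdepth=d+1$, is the step most likely to require subtle casework. A secondary difficulty in (1) is controlling how the reduction $I'\subset I$ interacts with the hypothesis $\sdepth_S I/J=d+1$: one needs to verify that the induced partition on $I'/(J\cap I')$ still achieves sdepth $\ge d+1$, so that the inductive hypothesis applies and the short exact sequence actually transmits the bound.
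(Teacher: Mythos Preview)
The paper does not prove Theorem \ref{pz}; it is quoted as a result of Popescu--Zarojanu from \cite{PZ1} and \cite[Theorem 1.5]{PZ2} and is used as a black box in the proof of Theorem \ref{m}. So there is no ``paper's own proof'' against which to compare your proposal.

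That said, your sketch contains a genuine misconception that would derail an actual proof. You write that ``when $\sdepth_S I/J=d+1$ the prescribed partition forces each interval containing $f_1$ to have an upper end of degree exactly $d+1$.'' This is not what $\sdepth=d+1$ says: it says that \emph{no} partition achieves minimum top-degree $\ge d+2$, not that a given partition must assign $f_1$ a top of degree $d+1$. One cannot read off structural constraints on a single partition from the hypothesis $\sdepth=d+1$; the arguments in \cite{PZ1,PZ2} (and in this paper for $E\neq\emptyset$) proceed instead by assuming $\depth_S I/J\ge d+2$ or $\sdepth_S I_b/J_b\ge d+2$ for a suitable subideal, extracting from a high-sdepth partition an injection $h$ into $C$, and then running a combinatorial path argument to either build a partition witnessing $\sdepth_S I/J\ge d+2$ (contradiction) or isolate a smaller ideal $I'$ with $\sdepth_S I'/J'\le d+1$ to which induction applies. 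Your case-(2) claim that ``$s\le q+r$ forces $\lcm(f_i,f_j)$ to sit in $C$ for each pair'' is also unsupported and in general false; the actual reductions go through \cite[Lemma 1.1]{PZ2}, not through a counting argument of this kind.
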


Next theorem is the main result of this paper.

\begin{Theorem}   Conjecture \ref{c} holds  in each of the following two  cases:
\begin{enumerate}
\item{} $r\leq 3$,
\item{} $r=4$, $E=\emptyset$ and there exists $c\in C$ such that $\supp c\not \subset \cup_{i\in [4]} \supp f_i$.
\end{enumerate}
\end{Theorem}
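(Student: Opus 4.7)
The plan is to handle the two cases separately, in both reducing to Theorem \ref{pz} by first disposing of extreme numerical ranges with Theorems \ref{po} and \ref{sh}, and then performing a structural reduction that either shrinks $E$ or separates out a variable.

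\emph{Part (1): $r\leq 3$.} The case $E=\emptyset$ is Theorem \ref{pz}(2). If $s>q+r$ apply Theorem \ref{po}, and if $s<2r$ apply Theorem \ref{sh}, so we may assume $E\neq\emptyset$ and $2r\leq s\leq q+r$. Proceed by induction on $|E|$, picking a generator $e\in E$ of minimum degree and splitting on $\deg e$. If $\deg e\geq d+2$, replace $J$ by $J+(e)$: this leaves $s$ unchanged and decreases $q$ by at most one, and the partition of $P_{I\setminus J}$ realizing $\sdepth=d+1$ restricts to one of $P_{I\setminus(J+(e))}$ with $\sdepth\leq d+1$, so the inductive hypothesis applies. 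The depth bound is then pulled back to $I/J$ via the Depth Lemma on
$$0\to ((e)+J)/J\to I/J\to I/((e)+J)\to 0.$$
If $\deg e=d+1$, the same sequence is used, but now the submodule $((e)+J)/J\iso (e)/((e)\cap J)$ is cyclic with a single degree-$(d+1)$ generator, whose Stanley depth is controlled by restricting the given optimal partition to $P_{(e)\setminus(e)\cap J}$; the quotient again falls under the induction hypothesis after careful bookkeeping of how $s,q$ change.

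\emph{Part (2): $r=4$, $E=\emptyset$, and some $c\in C$ has $x\in\supp c\setminus\bigcup_{i\in[4]}\supp f_i$.} Write $c=f_iyx$ for some $i$ and some variable $y$. Since no $f_j$ involves $x$, we have $I=I''S$ with $I''=(f_1,\ldots,f_4)\subset S''=K[x_j:x_j\neq x]$, and the squarefree monomial ideal $J$ decomposes as $J=J_1+xJ_2$ with squarefree monomial ideals $J_1=J\cap S''\subseteq J_2\subset S''$. This yields the short exact sequence
$$0\to x\,(I''/J_2)\to I/J\to I''/J_1\to 0,$$
the kernel being the $K$-subspace of $x$-divisible squarefree monomials. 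The hypothesis $c=xf_iy\notin J$ forces $f_iy\notin J_2$, so the kernel is nontrivial; moreover each $f_j$ with $xf_j\in J$ becomes zero in $I''/J_2$. The analysis bifurcates on whether at least one $f_j$ vanishes in $I''/J_2$: if so, $I''/J_2$ has effectively $r\leq 3$ and Part (1) supplies the bound; if not, the optimal $\sdepth=d+1$ partition of $P_{I\setminus J}$ restricted to the $x$-divisible and $x$-free parts yields partitions with $\sdepth\leq d+1$ on one of $I''/J_1$, $I''/J_2$, and a further combinatorial reduction (splitting off another generator or variable) brings the count of degree-$d$ generators down to $\leq 3$, at which point Part (1) applies. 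The Depth Lemma then propagates $\depth\leq d+1$ back to $I/J$.

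The main obstacle in both parts is that Stanley depth does not behave additively on short exact sequences, so the hypothesis $\sdepth_S I/J=d+1$ cannot simply be transferred to subquotients; instead the explicit optimal partition of $P_{I\setminus J}$ must be split combinatorially to manufacture witnessing partitions of the reduced posets, and the numerical counts $s,q,r$ have to be tracked through each reduction so as to remain within the hypotheses of Theorems \ref{po}, \ref{sh}, and \ref{pz}. The $r=4$ branch carries the additional subtlety that the reduced ideal $I''/J_1$ may retain all four generators, so one must use the support condition on $c$ to guarantee that the other summand $I''/J_2$ admits the desired collapse.
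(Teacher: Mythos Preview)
Both parts have genuine gaps.

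\textbf{Part (1).} The claim that ``the partition of $P_{I\setminus J}$ realizing $\sdepth=d+1$ restricts to one of $P_{I\setminus(J+(e))}$ with $\sdepth\leq d+1$'' is logically backwards: exhibiting a single partition of low sdepth never bounds Stanley depth from above; you would need to show that \emph{every} partition of the smaller poset has an interval ending in degree $\leq d+1$. The right tool is Rauf's lemma \cite[Lemma~2.2]{R}, which from $\sdepth_S I/J=d+1$ forces one of the two outer terms of your exact sequence to have sdepth $\leq d+1$. But when $\deg e=d+1$ and it is the cyclic submodule $U=((e)+J)/J$ that has $\sdepth=d+1$, Theorem~\ref{P} gives $\depth U=d+1$, and then the Depth Lemma does \emph{not} force $\depth I/J\leq d+1$: one may have $\depth I/J\geq d+2$ and $\depth I/(J+(e))=d$, with $\sdepth I/(J+(e))\geq d+2$, and nothing in your induction rules this out. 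The paper's proof is structured precisely to avoid this trap: it removes a single $b\in (B\cap(f_1))\setminus W$, and when $I_b/J_b$ has sdepth $\geq d+2$ it invokes the long path-chasing Lemmas~\ref{ml1} and~\ref{ml2} to either build a partition of $I/J$ with sdepth $d+2$ (contradiction) or to locate a sub-ideal $I'\subsetneq I$ with \emph{both} $\sdepth_S I'/J'\leq d+1$ \emph{and} $\depth_S I/(J,I')\geq d+1$, the second condition being exactly what makes the Depth Lemma go through.

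\textbf{Part (2).} Splitting on the variable $x=x_t$ is indeed the paper's starting point, and your sequence is essentially $0\to(I:x_t)/(J:x_t)\to I/J\to I/(J+I\cap(x_t))\to 0$ in disguise. The case where the first term has sdepth $\leq d+1$ is fine (Theorem~\ref{P} and Lemma~\ref{ele} finish it). But your treatment of the other case is incomplete: if no $f_j$ lies in $(J:x_t)$, both of your modules $I''/J_1$ and $I''/J_2$ still have four degree-$d$ generators, and ``a further combinatorial reduction'' is not an argument. The paper's mechanism here (Theorem~\ref{m0}) is specific: after reducing to $C\subset(W)$, a partition of $U_t$ with sdepth $d+2$ yields intervals $[x_tf_i,c_i]$ with $c_i=x_tw_{ik_i}$, which lift to disjoint intervals $[f_i,c_i]$ inside $I/J$; since $c\in C\cap(x_t)$ forces at least one $x_tf_i\in B$, these intervals absorb at least one $f_i$, and the complementary ideal $I_e$ is left with $\leq 3$ degree-$d$ generators. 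Rauf's lemma then gives $\sdepth I_e/J_e\leq d+1$, Part~(1) applies, and the Depth Lemma concludes. Your sketch is missing this interval-lifting construction, which is where the hypothesis on $\supp c$ is actually consumed.
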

This follows from our Theorems \ref{m}, \ref{m1}. The proof of \ref{m} extends the proof of   \cite[Theorem 2.3]{PZ2}.

We owe thanks to A. Zarojanu, who noticed  some small mistakes in a previous version of this paper and gave us the bad example \ref{bad}.

\section{Depth and Stanley depth}

Let $I\supsetneq J$ be  two  squarefree monomial ideals of $S$. We assume that $I$ is generated by squarefree monomials $f_1,\ldots,f_r$ of degrees $ d$  for some $d\in {\mathbb N}$ and a set of squarefree monomials $E$ of degree $\geq d+1$. We may suppose that either $J=0$, or  is generated by some squarefree monomials of degrees $\geq d+1$.
 $B$ (resp. $C$) denotes the set of the squarefree monomials of degrees $d+1$  (resp. $d+2$) of $I\setminus J$.

\begin{Lemma} \label{ele}  Let $J\subset I$  be square free monomial ideals and $j\in [n]$ be such that $(J:x_j)\not =(I:x_j)$. Then $\depth_S(I:x_j)/(J:x_j)\geq\depth_S I/J$.
\end{Lemma}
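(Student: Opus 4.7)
The natural framework is the multiplication-by-$x_j$ short exact sequence of $S$-modules
\begin{equation*}
0 \to (I:x_j)/(J:x_j) \xrightarrow{\,\cdot x_j\,} I/J \to (I,x_j)/(J,x_j) \to 0, \qquad (\ast)
\end{equation*}
which I abbreviate as $0 \to A \to B \to C \to 0$ (up to a harmless degree shift). The hypothesis $(J:x_j) \neq (I:x_j)$ forces $A \neq 0$. Since $J$ is squarefree, $(J:x_j^2) = (J:x_j)$, so $x_j$ is a non-zerodivisor on $A$, and $\depth_S A = \depth_{\bar S}(A/x_j A) + 1$ where $\bar S := S/(x_j)$. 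Since $x_j$ annihilates $C$, we have $C \cong \bar I / \bar J$ as an $\bar S$-module with $\depth_S C = \depth_{\bar S} C$.

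The depth lemma applied to $(\ast)$ gives $\depth A \geq \min\{\depth B, \depth C + 1\}$, so the statement reduces to the auxiliary bound $\depth C \geq \depth B - 1$. To establish this bound, the plan is to combine the standard inequality $\depth B \leq \depth(B/x_j B) + 1$ with an analysis of the $\mathbb Z^n$-graded structure of $B/x_j B$ as an $\bar S$-module. The key input from the squarefreeness of $I$ and $J$ is the stabilization $(B)_\alpha \cong (B)_{\alpha + e_j}$ under multiplication by $x_j$ whenever $\alpha_j \geq 1$, which forces $B/x_j B$ to have a controlled $\bar S$-structure closely related to $C$ and $A/x_j A$. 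Chaining these depth comparisons with the formulas above for $\depth_S A$ and $\depth_S C$ then yields $\depth_S B \leq \depth_S A$.

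The main obstacle is performing this multigraded analysis rigorously: one has to produce an appropriate short exact sequence of $\bar S$-modules identifying $B/x_j B$, and then carefully pass between $\depth_S$ and $\depth_{\bar S}$. The stabilization of the multigraded pieces coming from the squarefreeness of $I$ and $J$ is the essential input that makes this step plausible.
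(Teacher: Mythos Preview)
Your argument is incomplete: you correctly reduce the lemma to the auxiliary bound $\depth_S C \geq \depth_S B - 1$, but you do not prove this bound---you only sketch a plan and call it ``plausible.'' Note that $B/x_jB = I/(x_jI+J)$ while $C = I/(I\cap(x_j)+J)$, and $x_jI \subsetneq I\cap(x_j)$ whenever some minimal generator of $I$ is divisible by $x_j$; so the standard inequality $\depth_S B \leq \depth_S(B/x_jB)+1$ does not by itself control $\depth_S C$. Completing your route would require producing an explicit short exact sequence of $\bar S$-modules relating $C$, $A/x_jA$, and $B/x_jB$ and then chasing depths through it; you have not carried this out, and it is not a formality.

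The paper bypasses the exact sequence $(\ast)$ entirely with a two-line localization argument. Since localization is exact, $\pd_S(I/J) \geq \pd_{S_{x_j}}\bigl((I/J)\otimes_S S_{x_j}\bigr)$. Because $I$ and $J$ are squarefree, $(I:x_j)=(I:x_j^\infty)$ and likewise for $J$, so $(I/J)\otimes_S S_{x_j} = \bigl((I:x_j)/(J:x_j)\bigr)\otimes_S S_{x_j}$. Finally, $(I:x_j)$ and $(J:x_j)$ are generated by monomials not involving $x_j$, hence $(I:x_j)/(J:x_j)$ is extended from $K[x_i:i\neq j]$, and its projective dimension is unchanged upon inverting $x_j$. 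Auslander--Buchsbaum then converts the inequality of projective dimensions into the claimed depth inequality. This avoids the Depth Lemma and any comparison of $C$ with $B/x_jB$ altogether.
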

\begin{proof} We have
$$\pd_S I/J\geq \pd_{S_{x_j}} (I/J)\otimes S_{x_j}= \pd_{S_{x_j}} ((I:x_j)/(J:x_j))\otimes S_{x_j}=
\pd_S ((I:x_j)/(J:x_j))$$
the last equality holds since $x_j$ does not appear among the generators of $(I:x_j)$ and $(J:x_j)$. Now it is enough to apply the Auslander-Buchsbaum Theorem.
\end{proof}

\begin{Lemma} \label{l1} Let $t\in [n]$. Suppose that $I\not =J+I\cap (x_t)$ and $\depth_SI/(J+I\cap (x_t))=d$. If $\depth_SI/J\geq d+1$ then $\depth_SI/J=d+1$.
\end{Lemma}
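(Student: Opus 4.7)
The plan is to build a short exact sequence relating the three modules in sight and to squeeze $\depth_S I/J$ via the depth lemma. Starting from the inclusion $J \subset J + I \cap (x_t) \subset I$ we obtain
\begin{equation*}
0 \to (J + I \cap (x_t))/J \to I/J \to I/(J + I \cap (x_t)) \to 0.
\end{equation*}
Since $J \cap I \cap (x_t) = J \cap (x_t)$, the leftmost term is $(I \cap (x_t))/(J \cap (x_t))$, and because $I \cap (x_t) = x_t(I:x_t)$ and $J \cap (x_t) = x_t(J:x_t)$, multiplication by $x_t$ gives an $S$-module isomorphism $(I:x_t)/(J:x_t) \cong (I \cap (x_t))/(J \cap (x_t))$ (a degree shift which does not affect depth). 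So effectively one has
\begin{equation*}
0 \to (I:x_t)/(J:x_t) \to I/J \to I/(J + I \cap (x_t)) \to 0.
\end{equation*}

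Before using this, I would rule out the degenerate case $(I:x_t) = (J:x_t)$, so that Lemma \ref{ele} applies. If this equality held, then every monomial of $I$ divisible by $x_t$ would automatically lie in $J$, forcing $I \cap (x_t) \subset J$ and hence $J + I \cap (x_t) = J$; but then the hypothesis would yield $\depth_S I/J = \depth_S I/(J + I \cap (x_t)) = d$, contradicting $\depth_S I/J \geq d+1$. Thus $(I:x_t) \neq (J:x_t)$, and Lemma \ref{ele} delivers the bound $\depth_S (I:x_t)/(J:x_t) \geq \depth_S I/J \geq d+1$.

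The finish is a single invocation of the depth lemma for short exact sequences, in the form $\depth C \geq \min(\depth B,\depth A - 1)$ applied with $A = (I:x_t)/(J:x_t)$, $B = I/J$, $C = I/(J + I \cap (x_t))$:
\begin{equation*}
d \;=\; \depth_S I/(J + I \cap (x_t)) \;\geq\; \min\bigl(\depth_S I/J,\ \depth_S (I:x_t)/(J:x_t) - 1\bigr).
\end{equation*}
Because $\depth_S I/J \geq d+1 > d$, the minimum must be realized by the second argument, giving $\depth_S (I:x_t)/(J:x_t) \leq d+1$. Combined with the lower bound from Lemma \ref{ele} this produces the squeeze
\begin{equation*}
d+1 \;\leq\; \depth_S I/J \;\leq\; \depth_S (I:x_t)/(J:x_t) \;\leq\; d+1,
\end{equation*}
so $\depth_S I/J = d+1$, as required.

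There is no substantial obstacle here: the only subtle point is making sure Lemma \ref{ele} is legitimately applicable, i.e., that the colon ideals differ, which is forced by the combination of the two hypotheses. Once that is in place, the depth lemma does all the remaining work.
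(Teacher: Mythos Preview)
Your proof is correct and follows essentially the same approach as the paper: the same short exact sequence $0\to (I:x_t)/(J:x_t)\to I/J\to I/(J+I\cap(x_t))\to 0$, the Depth Lemma, and Lemma \ref{ele}. You are in fact more careful than the paper in explicitly verifying $(I:x_t)\neq(J:x_t)$ before invoking Lemma \ref{ele}; the paper leaves this implicit, and your justification that otherwise $J+I\cap(x_t)=J$ (contradicting the depth hypothesis) is exactly right.
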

\begin{proof} In the following exact sequence
$$0\to (I:x_t)/(J:x_t)\xrightarrow{x_t} I/J\to I/(J+I\cap (x_t))\to 0$$
the first term has depth $d+1$ by the Depth Lemma. Now it is enough to apply the above lemma.
\end{proof}

 Let $w_{ij}$ be the least common multiple of $f_i$ and $f_j$ and set $W$ to be the set of all $w_{ij}\in B$.

 \begin{Lemma} \label{1} If $d=1$ and  $B\subset E\cup W$ then $\depth_SI/J=1$.
 \end{Lemma}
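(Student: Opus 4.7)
The plan is to split $I/J$ via the short exact sequence
\[
0 \to A \to I/J \to C \to 0,
\]
where, after relabeling so that $f_i=x_i$ for $i\in[r]$, we set $A=(x_1,\ldots,x_r)/\bigl((x_1,\ldots,x_r)\cap J\bigr)$ and $C=I/\bigl((x_1,\ldots,x_r)+J\bigr)$, and then to finish by the Depth Lemma. Because $E$ is taken to consist of the minimal generators of $I$ of degree $\geq 2$, each $e\in E$ has support contained in $\{r+1,\ldots,n\}$. Hence, for any $i\in[r]$ and $k\in\{r+1,\ldots,n\}$, the squarefree degree-two monomial $x_ix_k\in I$ can belong neither to $W$ (its factor $x_k$ is not among the $f_j$) nor to $E$ (no element of $E$ involves a red variable); the hypothesis $B\subseteq E\cup W$ therefore forces $x_ix_k\in J$, yielding the structural containment
\[
(x_1,\ldots,x_r)\cdot (x_{r+1},\ldots,x_n)\subseteq J.
\]

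With this containment, every variable $x_t$ with $t>r$ annihilates $A$, so $A$ is naturally a module over $\bar S':=K[x_1,\ldots,x_r]$. A direct lifting argument identifies $A\iso \bar{\mathfrak m}/J_{[r]}$, where $\bar{\mathfrak m}=(x_1,\ldots,x_r)\bar S'$ and $J_{[r]}$ is the squarefree monomial ideal of $\bar S'$ generated by those minimal generators of $J$ whose support lies in $[r]$. Since $J_{[r]}$ is generated in degrees $\geq 2$ we have $J_{[r]}\subsetneq \bar{\mathfrak m}$, so $\depth_{\bar S'}\bar S'/J_{[r]}\geq 1$; applying the Depth Lemma to $0\to \bar{\mathfrak m}/J_{[r]}\to \bar S'/J_{[r]}\to K\to 0$ then gives $\depth_{\bar S'} A=1$, and consequently $\depth_S A=1$. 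Likewise, $C$ is killed by $x_1,\ldots,x_r$, and as a module over $\bar S'':=K[x_{r+1},\ldots,x_n]$ it is isomorphic to $(E)\bar S''/J\bar S''$; since $(E)\bar S''$ is generated in degrees $\geq 2$, the bound \cite[Proposition 3.1]{HVZ} recalled in the introduction yields $\depth_S C\geq 2$ (if $E=\emptyset$, then $C=0$ and the sequence degenerates to $I/J\iso A$).

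Finally, applying the Depth Lemma to $0\to A\to I/J\to C\to 0$, the relation
\[
\depth_S A \geq \min\bigl(\depth_S I/J,\;\depth_S C+1\bigr),
\]
together with $\depth_S A=1$ and $\depth_S C+1\geq 3$, forces $\depth_S I/J\leq 1$; combined with the lower bound $\depth_S I/J\geq d=1$ from the same \cite[Proposition 3.1]{HVZ}, we conclude $\depth_S I/J=1$. The only delicate point is the identification $A\iso \bar{\mathfrak m}/J_{[r]}$ over $\bar S'$, which relies crucially on the containment $(x_1,\ldots,x_r)(x_{r+1},\ldots,x_n)\subseteq J$ extracted from $B\subseteq E\cup W$; once that is in place, the two invocations of the Depth Lemma close the argument.
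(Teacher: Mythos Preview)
Your proof is correct and follows essentially the same route as the paper's: both derive the key containment $(x_1,\ldots,x_r)(x_{r+1},\ldots,x_n)\subseteq J$ from $B\subseteq E\cup W$, then use the exact sequence $0\to A\to I/J\to I/(J,x_1,\ldots,x_r)\to 0$ together with the Depth Lemma. The only difference is in the verification that $\depth A=1$: the paper treats this as the case $E=\emptyset$ and argues via $\depth_{S'} I'=1$ and $\depth_{S'} J'\geq 2$ in the sequence $0\to J'\to I'\to I'/J'\to 0$, whereas you use the sequence $0\to\bar{\mathfrak m}/J_{[r]}\to \bar S'/J_{[r]}\to K\to 0$; both are short applications of the Depth Lemma over $K[x_1,\ldots,x_r]$. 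One small remark: your assertion that $J_{[r]}\subsetneq\bar{\mathfrak m}$ implies $\depth_{\bar S'}\bar S'/J_{[r]}\geq 1$ uses (implicitly) that $J_{[r]}$ is squarefree, hence radical, so $\bar{\mathfrak m}\in\Ass(\bar S'/J_{[r]})$ would force $J_{[r]}=\bar{\mathfrak m}$.
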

 \begin{proof} First suppose that  $E=\emptyset$, let us say $I=(x_1,\ldots,x_r)$.  Set $S'=K[x_1,\ldots,x_r]$, $I'=I\cap S'$, $J'=J\cap S'$. By hypothesis $B\subset S'$ and it follows that $(x_{r+1},\ldots,x_n)I\subset J$ and so $\depth_SI=\depth_{S'}I'=1$. But $\depth_{S}J\geq 2$, if $J\not =0$,  and  so $\depth_{S}I/J= 1$ by the Depth Lemma.

 Now, suppose that  $E\not =\emptyset$. In the following exact sequence
 $$0\to (x_1,\ldots,x_r)/J\cap   (x_1,\ldots,x_r)\to I/J\to I/(J,x_1,\ldots,x_r)\to 0$$
 the first term has depth 1 as above and the last term has depth $\geq d+1$ since it is generated by squarefree monomials of degrees $\geq 2$ from $E$. Again the Depth Lemma gives  $\depth_{S}I/J= 1$.
 \end{proof}

\begin{Lemma}\label{e}  Suppose that $I \subset S$ is  generated by some squarefree monomials $f_1,...f_r$ of degree $d$. Assume that for all $b\in B$ all divisors of $b$ of degree $d$ are among $\{f_1,\ldots,f_r\}$. Then $\depth_S I/J = d$.
\end{Lemma}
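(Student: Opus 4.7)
The plan is to extend the proof of Lemma~\ref{1} from $d=1$ to arbitrary $d$, splitting into a ring-reduction step and an upper-bound argument for the depth.

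I would first show that $(x_k : k \notin V) \cdot I \subset J$, where $V = \bigcup_i \supp f_i$. Indeed, fix $k \notin V$ and any generator $f_i$; the monomial $x_k f_i$ is squarefree of degree $d+1$ and lies in $I$. If $x_k f_i \in B$, the hypothesis would force its degree-$d$ divisor $(f_i/x_l) x_k$ (for every $l \in \supp f_i$) to be some $f_j$, but this $f_j$ would contain $x_k$, contradicting $k \notin V$. Hence $x_k f_i \in J$. It follows that $I/J$ is annihilated by $(x_k : k \notin V)$ and may be viewed as a module over $S' = K[x_k : k \in V]$; with $I' = (f_1,\ldots,f_r)S'$ and $J' = J \cap S'$, a direct check shows that every monomial of $I \setminus J$ is supported in $V$, so $I/J \cong I'/J'$ as $S'$-modules and $\depth_S I/J = \depth_{S'} I'/J'$, while the hypothesis on $B$ is preserved. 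Thus I may assume $V = [n]$.

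Next, to establish $\depth_S I/J \leq d$, I would split into two cases. In the first, some $f_i$ satisfies $x_k f_i \in J$ for every $k \notin \supp f_i$; then $(J:f_i)$ contains the prime $(x_k : k \notin \supp f_i)$ of height $n-d$, so since $S f_i \cong S/(J:f_i)$ embeds in $I/J$, an associated prime $P$ of this cyclic submodule has $\dim S/P \leq d$, whence $\depth_S I/J \leq d$. In the second case every $f_i$ divides some $b_i \in B$; here the hypothesis applied to each $b_i$ yields a variable-swap closure on the supports $\{\supp f_j\}$, forcing the Stanley--Reisner complex $\Delta$ of $I$ to satisfy $\depth_S S/I \leq d-1$ via a Reisner-type argument (for example, by exhibiting disconnected links or non-vanishing reduced homology in the appropriate degree). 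Then the short exact sequences $0 \to I \to S \to S/I \to 0$ and $0 \to J \to I \to I/J \to 0$, combined with $\depth_S J \geq d+1$ (which holds since $J$ is a squarefree monomial ideal generated in degree $\geq d+1$) and the Depth Lemma, yield $\depth_S I/J \leq \depth_S I = \depth_S S/I + 1 \leq d$.

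Pairing this with the lower bound $\depth_S I/J \geq d$ from \cite[Proposition 3.1]{HVZ} gives the desired equality $\depth_S I/J = d$. The main obstacle is the second case: converting the combinatorial variable-swap closure into the sharp bound $\depth_S S/I \leq d-1$ requires isolating the precise topological feature of $\Delta$ forced by the hypothesis, a step I expect to occupy the bulk of the argument.
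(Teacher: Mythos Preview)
Your reduction to $V=[n]$ and the associated-prime argument in Case~1 are fine. The gap is Case~2, and it is not just that the Reisner step is missing: the inequality $\depth_S S/I \le d-1$ you are aiming for is \emph{false} in general. Take $n=6$, $d=2$,
\[
I=(x_1x_2,\,x_1x_3,\,x_2x_3,\,x_4x_5,\,x_4x_6,\,x_5x_6),
\]
and let $J$ be generated by all squarefree degree-$3$ monomials of $I$ except $x_1x_2x_3$ and $x_4x_5x_6$. Then $B=\{x_1x_2x_3,\,x_4x_5x_6\}$, every degree-$2$ divisor of either element of $B$ is one of the $f_i$, $V=[6]$, and each $f_i$ divides an element of $B$; so the hypothesis holds and you are squarely in Case~2. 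But
\[
S/I \;\cong\; \bigl(K[x_1,x_2,x_3]/(x_1x_2,x_1x_3,x_2x_3)\bigr)\otimes_K \bigl(K[x_4,x_5,x_6]/(x_4x_5,x_4x_6,x_5x_6)\bigr),
\]
each tensor factor has depth $1$, and depth is additive over such tensor products, so $\depth_S S/I = 2 = d$, not $\le d-1$. Equivalently $\depth_S I = d+1$. The point is that $S/I$ does not see $J$, while in this example it is precisely the passage from $I$ to $I/J$ that brings the depth down to $d$; any argument that factors through $\depth_S I$ alone cannot succeed.

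The paper's proof avoids this by keeping $J$ in the picture throughout: it inducts on $d$, choosing a variable $x_n$ in $\supp f_1$ and checking that $(I:x_n)/(J:x_n)$, viewed over $S'=K[x_1,\dots,x_{n-1}]$, again satisfies the hypothesis with $d$ replaced by $d-1$. Induction then gives $\depth_S (I:x_n)/(J:x_n)=d$, and Lemma~\ref{ele} yields $\depth_S I/J\le d$, which together with the standard lower bound finishes the proof. This is both shorter and, as the example shows, the right level of generality: the colon by a variable carries $J$ along, whereas your route discards it.
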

\begin{proof} Apply induction on $d\geq 1$. If $d=1$   then   apply the above lemma.
Assume $d>1$. We may suppose that $n \in \supp f_1$.  $(I:x_n)$ is an extension of a squarefree monomial ideal $I'$ of $S'=K[x_1,\ldots,x_{n-1}]$ which is generated in degree $\geq d-1$. Similarly $(J:x_n)$ is generated by a squarefree monomial ideal $J'$ of $S'$. Note that the generators of $I'$ of degree $d-1$ have the form $f'_i=f_i/x_n$ for  $f_i\in (x_n)$, and the squarefree monomials $B'$ of degrees $d$ from $I'\setminus J'$ have the form $b'=b/x_n$ for some $b\in (B\cap (x_n))$. Certainly we must consider also the case when $f_j\not\in (x_n)$. If $x_nf_j\in J$ then $f_j\in (J:x_n)$  is not in $B'$. Otherwise, $f_j=(x_nf_j)/x_n\in B'$. Note that   all divisors of degree $d-1$ of each $b'\in B'$ are among $f'_i$.
 By induction hypothesis we have $\depth_{S'} I'/J'= d-1$ and so $\depth_S(I:x_n)/(J:x_n)= d$. Now it is enough to apply Lemma \ref{ele}.
\end{proof}
An obstruction to improve Lemma \ref{1} and the above lemma is given by the following example.
\begin{Example}\label{ex3} {Let $n=5$, $d=2$, $r=5$, $I = (x_1x_2,x_1x_3,x_2x_3,x_1x_4,x_3x_5)$, \hfill\\
$J=(x_1x_2x_5,x_1x_4x_5,x_2x_3x_4,x_3x_4x_5)$, $B=\{x_1x_2x_3,x_1x_2x_4,x_1x_3x_4,x_1x_3x_5,x_2x_3x_5\}$.
We have $\depth_S I/J = 3$  because  $\depth_S S/J = 3$, $\depth_S S/I = 2$ and with the help of Depth Lemma. Note that each $b\in B$ is the least common multiple of two generators of $I$, but for example $b=x_1x_2x_4$ has $x_2x_4\not \in I$ as a  divisor of degree $2$.}
\end{Example}

 Let $C_2=C\cap W$ and $C_3$ be the set of all $c\in C$ having all divisors from $B\setminus E$ in $W$. In particular each monomial of $C_3 $ is the least common multiple of three of $f_i$. The converse is not true as shows the following example.

 \begin{Example} \label{ex} Let $n=4$, $d=2$, $r=3$, $f_1=x_1x_2$, $f_2=x_2x_3$, $f_3=x_3x_4$, $I=(f_1,f_2,f_3)$ and $J=0$. Then $c=x_1x_2x_3x_4$ is the least common multiple of $f_1,f_2,f_3$ but has a divisor $b=x_1x_2x_4\in B$ which is not the least common multiple of two $f_i$.
 \end{Example}

Next theorem is our key result, its proof is based on \cite[Theorem 2.1]{PZ2} and will be given in the last section. The main reason that this proof works for $r\leq 3$ but not for $r=4$ is that in the first case $|C_3|\leq 1$ but in the second one we may have $|C_3|=4$, which makes the things harder. However, for $r\geq 5$ will appear a new problem since we may have $B\subset W$ and $s\geq 2r$ (for example when $r=5$, $d=2$ we may have $s=10=2r$). We remind that by Theorem \ref{sh} we had to check Stanley's Conjecture only when $s\geq 2r$.

\begin{Theorem} \label{m} Conjecture \ref{c} holds for $r\leq 3$, the case $r=1$ being given in Theorem \ref{pz}.
\end{Theorem}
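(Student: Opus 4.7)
The plan is to extend the argument of \cite[Theorem 2.3]{PZ2}, which already handles $1<r\leq 3$ with $E=\emptyset$ (see Theorem \ref{pz}(2)). The remaining new cases are $r\in\{2,3\}$ with $E\neq\emptyset$; the case $r=1$ is covered by Theorem \ref{pz}(1). Before entering the main argument I would peel off the situations already settled by Theorems \ref{po} and \ref{sh}: if $s>q+r$ or $s<2r$ the conclusion is immediate, so I may assume $2r\leq s\leq q+r$.

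The main induction would be on $d$, with an inner induction on $n$ (or on $|E|$). The base case $d=1$ is covered by Lemma \ref{1} in the form it naturally comes up. For the inductive step, I would aim to find a variable $x_t$ such that, after passing through the short exact sequence
\[
0\to (I:x_t)/(J:x_t)\xrightarrow{x_t} I/J\to I/(J+I\cap(x_t))\to 0,
\]
one of the following occurs: either $I/(J+I\cap(x_t))$ satisfies the hypotheses of Conjecture \ref{c} in strictly smaller complexity (fewer variables, smaller $d$, or smaller $|E|$) so that induction applies, or this quotient directly has depth $d$ by Lemma \ref{e}. In the first case Lemma \ref{ele} (applied to $(I:x_t)/(J:x_t)$) together with the Depth Lemma forces $\depth_S I/J\leq d+1$; in the second, Lemma \ref{l1} gives the same conclusion. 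To set up the reduction I would assume for contradiction $\depth_S I/J\geq d+2$ and use the assumption $\sdepth_S I/J=d+1$ to produce a Stanley partition $P$ in which every generator $f_i$ sits at the bottom of an interval $[f_i,v_i]$ with $\deg v_i\geq d+1$, hence each $f_i$ is matched with a distinct $b_i\in B$.

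The combinatorial heart of the argument is the same phenomenon that is highlighted in the text preceding the theorem: for $r\leq 3$ the set $C_3$ has $|C_3|\leq 1$ (at most one monomial of degree $d+2$ can be the lcm of three of the $f_i$ when there are only three of them). This scarcity of ``triple lcm'' monomials allows one to trace the pairings enforced by the Stanley partition and locate a variable $x_t$ dividing some $f_i$ (or dividing some minimal generator of $E$ of degree $d+1$) with the property that, after colon by $x_t$ and after factoring out $I\cap (x_t)$, the remaining part has the structure needed to invoke either Lemma \ref{e}, Theorem \ref{pz}, or the inductive hypothesis with a strictly smaller datum.

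The main obstacle I expect is exactly this matching analysis once $E$ is nonempty: the generators in $E$ can produce elements of $B$ that are not of the form $w_{ij}$, so the clean identification of $B$ with $W\cup E$ used when $E=\emptyset$ is lost, and one must argue more carefully how a Stanley partition of $\sdepth$ $d+1$ must respect the ``extra'' elements coming from $E$. The tight bound $|C_3|\leq 1$ is what keeps the bookkeeping tractable for $r\leq 3$; already at $r=4$ one may have $|C_3|=4$, and the same strategy genuinely breaks down, which is the reason a distinct theorem (\ref{m1}) is needed there.
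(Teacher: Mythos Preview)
Your opening reductions (peel off $s>q+r$ and $s<2r$ via Theorems~\ref{po} and~\ref{sh}) match the paper, but after that your plan diverges from the actual proof and does not close. Two concrete gaps:

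\textbf{(i) The base case is not covered.} Lemma~\ref{1} assumes $B\subset E\cup W$; it does \emph{not} prove Conjecture~\ref{c} for $d=1$. The hypothesis $\sdepth_S I/J=d+1=2$ does not force $B\subset E\cup W$ (Example~\ref{ex3} already shows $B\subset W$ is compatible with $\depth=3$, and conversely one can have sdepth $2$ without $B\subset E\cup W$). So your induction on $d$ has no base.

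\textbf{(ii) The inductive step is only a wish.} Passing to $(I:x_t)/(J:x_t)$ drops the degree to $d-1$, but nothing in your argument controls the Stanley depth of the colon or of $I/(J+I\cap(x_t))$; saying that ``$|C_3|\leq 1$ allows one to locate a variable $x_t$'' is precisely the hard part, and you have not indicated any mechanism for doing it. Lemma~\ref{e} also needs all degree-$d$ divisors of each $b\in B$ to lie among the $f_i$, a condition you never arrange.

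The paper's proof proceeds quite differently: it inducts on $|E|$ and on $r$ (not on $d$), fixes a single $b\in (B\cap(f_1))\setminus W$, passes to $I_b=(f_2,\ldots,f_r,B\setminus\{b\})$, and splits on whether $\sdepth_S I_b/J_b\leq d+1$ (then induction applies) or $\geq d+2$. In the latter case the entire technical content sits in Lemmas~\ref{ml1} and~\ref{ml2}: starting from a partition $P_b$ of $I_b/J_b$ with sdepth $d+2$, one runs a delicate path/alternating argument inside $B$ and $C$ (tracking ``weak'' and ``bad'' paths and repeatedly swapping intervals) to either extend $P_b$ to a partition of $I/J$ with sdepth $d+2$, contradicting the hypothesis, or to produce a proper subideal $I'\subsetneq I$ generated by some of the $f_i$ together with a subset of $B$, with $\sdepth_S I'/J'\leq d+1$ and $\depth_S I/(J,I')\geq d+1$, so that induction plus the Depth Lemma finish. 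The bound $|C_3|\leq 1$ is indeed what keeps this path analysis finite and manageable for $r\leq 3$, but it enters through that combinatorial argument, not through a choice of variable to colon by.
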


\begin{Example}\label{ex1} { Let $n=5$, $f_1=x_1x_2$, $f_2=x_1x_3$, $f_3=x_1x_4$,  $a=x_2x_3x_5$, $E=\{a\}$, $I=(f_1,f_2,f_3,a)$, $J=(x_4a)$.  We have $w_{12}=f_1x_3$, $w_{13}= f_1x_4$, $w_{23}=f_2x_4$. Set $c=w_{12}x_4$, $c_1=w_{12}x_5$,  $c_2=w_{23}x_5$, $c_3=w_{13}x_5$. Then $C=\{c,c_1,c_2,c_3\}$
 and  $B\setminus E=B\cap ( \cup_i [f_i,c_i])$. Thus $s=7$, $q=4$, $r=3$. It is easy to see that $\sdepth_SI/J= 3$. Indeed, note that $c_1$ is the only $c'\in C$ which is multiple of $a$. Suppose that there exists a partition $ P$ on $P_{I/J}$ with sdepth $4$. Then we have  necessarily in $ P$ the interval $[a,c_1]$.  If $ P$ contains the interval $[f_1,c]$ then it must contain also the intervals $[f_2,c_2]$ and so  $[f_3,c_3]$, but then $w_{13}\in [f_1,c]\cap [f_3,c_3]$, that is the union is not disjoint.   If $ P$ contains the interval $[f_1,c_3]$ then $ P$ contains either $[f_3,c]$, $[f_2,c_2]$, or $[f_2,c]$, $[f_3,c_2]$, in both cases the intersection of these two intervals contains $w_{23}$, which is false.  By Theorem \ref{m} we get  $\depth_SI/J\leq 3$, this inequality being in fact an equality.}
\end{Example}

\section{A special case of $r=4$}

\begin{Theorem} \label{m0} Suppose that $I \subset S$ is minimally generated by some  squarefree monomials $\{f_1,\ldots,f_r\}$ of degrees $d$ such that
there exists $c\in C$ with $\supp c\not \subset \cup_{i\in [r]} \supp f_i$.
If Conjecture \ref{c} holds for $r'<r$ and
 $\sdepth_SI/J=d+1$,
then $\depth_SI/J\leq d+1$.
\end{Theorem}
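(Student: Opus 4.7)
The plan is to pivot on a variable $x_t \in \supp c \setminus \bigcup_i \supp f_i$, whose existence is the key hypothesis, and reduce to Conjecture~\ref{c} for $r' < r$. Since $x_t$ divides no $f_i$, we have $I : x_t = I$ and $I \cap (x_t) = x_t I$, yielding the short exact sequence
\[
0 \to (I:x_t)/(J:x_t) \xrightarrow{x_t} I/J \to I/(J + I\cap(x_t)) \to 0,
\]
in which the left term equals $I/(J:x_t)$ and the right term equals $I/(J+x_tI)$. Since $\depth I/J \geq d$ always, we may assume $\depth I/J \geq d+1$; by Lemma \ref{l1} it suffices to produce $x_t$ with $\depth I/(J + x_t I) = d$, or alternatively to establish $\depth I/(J:x_t) \leq d+1$ and exploit the Depth Lemma.

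I would pursue the latter by re-expressing the left term. Set $F = \{i \in [r] : x_t f_i \in J\}$; the squarefree degree-$d$ generators of $J:x_t$ are precisely $\{f_i : i \in F\}$, while all remaining generators of $J:x_t$ have degree $\geq d+1$. Consequently $I/(J:x_t) \cong \bar I/\bar J$, where $\bar I = (f_i : i \notin F)$ is minimally generated by $r - |F|$ squarefree monomials of degree $d$ and $\bar J = \bar I \cap (J:x_t)$ is generated in degree $\geq d+1$. If $|F| \geq 1$, then $r - |F| < r$, and the inductive hypothesis (Conjecture \ref{c} at $r' = r - |F|$) gives $\depth \bar I / \bar J \leq d+1$ once $\sdepth \bar I / \bar J \leq d+1$ is checked. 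I would deduce this sdepth bound by projecting a Stanley decomposition of $I/J$ realizing $\sdepth = d+1$ through the surjection $I/J \to I/(J:x_t)$ and verifying that the images of the intervals retain top degree at most $d+1$. Combining $\depth I/(J:x_t) \leq d+1$ with the Depth Lemma, and if needed with Lemma \ref{l1}, then produces the desired $\depth I/J \leq d+1$.

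The main obstacle is the case $F = \emptyset$, in which every $x_t f_i$ lies in $B$ and the reduction above does not decrease $r$. In this case I would switch to the right term and work in $\tilde S = S/(x_t)$, where $I/(J+x_tI) \cong \tilde I/J_1$ with $\tilde I = (f_1, \dots, f_r)$ and $J_1 = J \cap \tilde S$. The element $b := c/x_t$ sits in $\tilde B$ as a degree-$(d+1)$ squarefree element of $\tilde I \setminus J_1$ not involving $x_t$, and together with the rigidity imposed by $x_t f_i \in B$ for every $i$ this should force $\depth_{\tilde S}\tilde I/J_1 = d$; Lemma \ref{l1} would then give $\depth I/J = d+1$. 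Turning this combinatorial intuition into the rigorous estimate $\sdepth_{\tilde S}\tilde I/J_1 \leq d$, via a careful restriction of a Stanley decomposition of $I/J$ of sdepth $d+1$ to the $x_t$-free stratum, is where the technical effort lies, since one must control the fate of every interval $[f_i, v_i]$ whose top element $v_i$ involves $x_t$ as well as the pairing of each $x_t f_i \in B$ with some monomial of $C$.
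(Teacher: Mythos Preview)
Your choice of $x_t\in\supp c\setminus\bigcup_i\supp f_i$ is right, and the colon module $(I{:}x_t)/(J{:}x_t)=I/(J{:}x_t)$ is indeed the object the paper studies. The genuine gap is the step where you claim $\sdepth \bar I/\bar J\leq d+1$ by ``projecting a Stanley decomposition of $I/J$'' through the surjection $I/J\to I/(J{:}x_t)$. Stanley spaces do not push forward along quotient maps, and in any event exhibiting one decomposition of $\bar I/\bar J$ with small intervals would give a \emph{lower} bound on its sdepth, not the upper bound you need. In fact $\sdepth I/(J{:}x_t)\geq d+2$ is perfectly possible while $\sdepth I/J=d+1$; the paper treats precisely this as the main case. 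Your split on whether $F=\emptyset$ is therefore not the operative dichotomy, and the case $F=\emptyset$, which you leave as an unproved sketch, is not a special obstacle but part of the generic situation.

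The paper instead dichotomizes on $\sdepth U_t$, where $U_t=(I\cap(x_t))/(J\cap(x_t))\cong (I{:}x_t)/(J{:}x_t)$. If $\sdepth U_t\leq d+1$ then, since $U_t$ is generated in degree $d+1$, Theorem~\ref{P} (not the inductive hypothesis) gives $\depth U_t=d+1$, and Lemma~\ref{ele} finishes. If $\sdepth U_t\geq d+2$, one first reduces to $C\subset(W)$ via \cite[Lemma~1.1]{PZ2} and then fixes a partition of $U_t$ with sdepth $d+2$: its intervals $[x_tf_i,c_i]$ for $i\in[e]$ necessarily end at $c_i=x_tw_{ik_i}$, and the enlarged intervals $[f_i,c_i]$ are pairwise disjoint in $P_{I/J}$. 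Setting $I_e=(f_j:j>e)+(B\setminus\bigcup_{i\leq e}[f_i,c_i])$ and $J_e=J\cap I_e$, the quotient $I/(J+I_e)$ has sdepth $\geq d+2$ via those intervals, so \cite[Lemma~2.2]{R} forces $\sdepth I_e/J_e\leq d+1$, and now the inductive hypothesis at $r'=r-e<r$ applies to $I_e/J_e$; the Depth Lemma on $0\to I_e/J_e\to I/J\to I/(J+I_e)\to 0$ concludes. The idea you are missing is this: rather than trying to force the colon module itself to have small sdepth, one uses a high-sdepth partition of it to manufacture a different short exact sequence whose left-hand piece has strictly fewer degree-$d$ generators.
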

\begin{proof}  By \cite[Lemma 1.1]{PZ2} we may assume that $C\subset (W)$. By hypothesis,  choose $t\in \supp c$ such that  $t\not \in  \cup_{i\in [r]} \supp f_i$.  We may suppose that $B\cap (x_t)=\{x_tf_1,\ldots x_tf_e\}$ for some $1\leq e\leq r$.
 Set $ I_t=I\cap (x_t)$, $ J_t= J\cap (x_t)$ and $ U_t= I_t/ J_t$. Then $B_t$ generates $I_t$.

 First assume that $\sdepth_S U_t\leq  d + 1$.  It follows that $\depth_S U_t \leq d + 1$ by \cite[Theorem 4.3]{P}.  But
$U_t\cong (I : x_t)/(J : x_t)$ and so $\depth_S  U_t \geq \depth_S I/J$ by
Lemma \ref{ele}, which is enough.

 Now assume that
 $U_t $ has sdepth $\geq d+2$. Let  $P_{U_t}$  be a partition on $ U_t$ with sdepth $d + 2$ and let   $[b_i, c_i]$ be the disjoint intervals starting with $b_i=x_tf_i$, $i\in [e]$. We may suppose that $c_i\in C$ for $i\in [e]$.
 We have   $c_i=x_tw_{ik_i}$ for some $1\leq k_i\leq r$, $k_i\not = i$  because $C\subset (W)$. Note that $x_tf_{k_i}\in B$ and so  $k_i\leq e$. We consider
the intervals $[f_i,c_i]$. These intervals contain $x_tf_i$ and $w_{ik_i}$.  If $w_{ik_i} =w_{jk_j}$  for $i\not =j$ then we get $c_i=c_j$ which is false. Thus these intervals are disjoint.

   Let $I_e$ be the ideal generated by  $f_j$ for $ e<j\leq r$ and
  $B\setminus (\cup_{i=1}^ e[f_i, c_i])$. Set $J_e=I_e\cap J$ and $U_e=I_e/J_e$. Note that  $c_i\not \in I_e$ for any $i\in [ e]$.
 In the following exact sequence
$$0\to I_e/J_e\to I/J\to I/J+I_e\to 0$$
the last term has a partition of sdepth $d+2$ given by the intervals  $[f_i,c_i]$ for $1\leq i\leq e$. It follows that $I_e\not =J_e$ because $\sdepth_SI/J=d+1$.
 Then $\sdepth_SI_e/J_e\leq d+1$  using \cite[Lemma 2.2]{R} and so
$\depth_SI_e/J_e\leq d+1$ by Conjecture \ref{c} applied for $r'<r$. But the last term of the above sequence has depth $>d$ because
$x_t$ does not annihilate $f_i$ for $i\in [e]$. With the Depth Lemma we get $\depth_SI/J\leq d+1$.
\end{proof}

\begin{Example}\label{eex1}{ Let $n=5$, $r=4$, $f_1=x_2x_3$, $f_2=x_1x_2$, $f_3=x_3x_4$, $f_4=x_3x_5$ and $J=(x_1x_2x_4x_5)$. We have  $w_{12}=x_1x_2x_3$, $w_{13}=x_2x_3x_4$, $w_{14}=x_2x_3x_5$, $w_{34}=x_3x_4x_5$, $w_{23}=x_1x_2x_3x_4$, $w_{24}=x_1x_2x_3x_5$, $C_2=\{w_{23},w_{24}\}$,
$C=C_2\cup \{x_1w_{34},x_2x_3x_4x_5\}$, ${\tilde I}_1=\{x_1f_3,x_1f_4,f_2\}\supset J$, ${\tilde I}_4=\{f_3,x_4f_2\}\supset J$ and $B\cap (x_1)=\{x_1f_3,x_1f_4,x_4f_2,x_5f_2,w_{12}\}$, $B\cap (x_4)=\{w_{13},w_{14},w_{34},x_4f_2,x_1f_3\}$. Note that $\sdepth_S  U_1\leq d+1=3$,  $\sdepth_S  U_4\leq 3$ because $|B\cap (x_1)|=|B\cap (x_4)|=5>|C\cap (x_1)|+1=|C\cap (x_4)|+1=4$. Thus $\depth_S U_1=\depth_SU_4\leq 3$ and so we get $\depth_SI/J\leq 3$ using two different $t$.  }
\end{Example}

\begin{Theorem} \label{m1}  Suppose that $I \subset S$ is minimally generated by four  squarefree monomials $\{f_1,\ldots,f_4\}$ of degrees $d$ such that
there exists $c\in C$ such that $\supp c\not \subset \cup_{i\in [4]} \supp f_i$. If  $\sdepth_SI/J=d+1$ then $\depth_SI/J\leq d+1$.
\end{Theorem}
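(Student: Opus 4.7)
The plan is to deduce Theorem \ref{m1} as an immediate application of Theorem \ref{m0} in the special case $r=4$. Indeed, the hypotheses of Theorem \ref{m1} (namely that $I$ is minimally generated by four squarefree monomials $f_1,\ldots,f_4$ of degree $d$, that $\sdepth_S I/J = d+1$, and that some $c\in C$ satisfies $\supp c \not\subset \cup_{i\in[4]} \supp f_i$) match the hypotheses of Theorem \ref{m0} verbatim with $r=4$. The only extra ingredient required by Theorem \ref{m0} is that Conjecture \ref{c} be known for all $r'<r=4$.

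Thus the remaining task is to verify Conjecture \ref{c} for $r'\in\{1,2,3\}$. For $r'=1$, this is exactly Theorem \ref{pz}(1), which allows arbitrary $E$. For $r'\in\{2,3\}$, this is supplied by Theorem \ref{m}, which establishes Conjecture \ref{c} for all $r\leq 3$, again with arbitrary $E$. It is important that these results allow $E\neq\emptyset$: although $I$ itself in Theorem \ref{m1} has $E=\emptyset$, the auxiliary ideal $I_e$ constructed inside the proof of Theorem \ref{m0} is generated by the monomials $f_j$ with $e<j\leq 4$ together with the squarefree monomials of degree $d+1$ in $B\setminus(\cup_{i=1}^e[f_i,c_i])$, and the second family forms a potentially nonempty set $E$ for $I_e$.

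With these results in hand one simply invokes Theorem \ref{m0}: the chosen $c\in C$ provides a variable $t\in\supp c$ not in $\cup_i \supp f_i$, and the dichotomy $\sdepth_S U_t \leq d+1$ versus $\sdepth_S U_t\geq d+2$ of that proof splits into the Lemma \ref{ele} branch (where $\depth_S I/J\leq \depth_S U_t \leq d+1$ follows from Theorem \ref{P}) and the branch where the ideal $I_e$ appears and Conjecture \ref{c} is applied for $r'<4$ to deduce $\depth_S I_e/J_e\leq d+1$, after which the Depth Lemma yields $\depth_S I/J\leq d+1$. Since no new obstruction arises beyond verifying the lower-$r'$ cases of Conjecture \ref{c}, and both are already available, the proof is essentially a citation; the real content of Theorem \ref{m1} is carried by Theorem \ref{m0} and the preceding results.
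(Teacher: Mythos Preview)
Your proposal is correct and matches the paper's own proof exactly: the paper's proof of Theorem \ref{m1} is the single sentence ``Apply Theorem \ref{m0}, since Conjecture \ref{c} holds for $r<4$ by Theorem \ref{m}.'' Your elaboration (noting that Theorem \ref{m} covers $r'\leq 3$ with arbitrary $E$, which is needed because the auxiliary ideal $I_e$ in the proof of Theorem \ref{m0} may have nonempty $E$) is a useful observation, but the argument is otherwise identical to the paper's.
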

\begin{proof} Apply Theorem \ref{m0}, since Conjecture \ref{c} holds for $r<4$ by Theorem \ref{m}.
\end{proof}

\section{Proof of Theorem \ref{m}}

Suppose that $E\not =\emptyset$ and $s\leq q+r$.
For   $b=f_1x_{i}\in B$ set $I_b=(f_2,\ldots,f_r,B\setminus\{b\})$, $J_b=J\cap I_{b}$. If $\sdepth_SI_{b}/J_{b}\geq d+2$ then let $P_b$ be a partition on $I_{b}/J_{b}$ with sdepth $d+2$. We may choose $P_b$ such that each interval starting with a squarefree monomial of degree $d$, $d+1$ ends with a  monomial of $C$. In $P_{b}$ we  have some intervals $[f_k,f_kx_{i_k}x_{j_k}]$, $1<k\leq r$ and for all $b' \in B\setminus\{b,f_2x_{i_2},f_2x_{j_2},\ldots,f_rx_{i_r},f_rx_{j_r}\}]$ an interval $[b',c_{b'}]$. We define $h:[[\{f_2,\ldots,f_r\}\cup B]\setminus\{b,f_2x_{i_2},f_2x_{j_2},\ldots,f_rx_{i_r},f_rx_{j_r}\}]\to C$ by $f_k\to f_kx_{i_k}x_{j_k}$ and $b'\to c_{b'}$. Then $h$ is an injection and $|\Im\ h|= s-r\leq q$ (if $s=r+q$ then $h$ is a bijection).

\begin{Lemma}\label{ml1} Suppose that the following conditions hold:
\begin{enumerate}
\item{} $r=2$, $4\leq s\leq q+2$,
\item{} $C\subset ((f_1)\cap (f_2))\cup ((E)\cap (f_1,f_2))\cup (\cup_{a,a'\in E, a\not =a'}(a)\cap (a'))$,
\item{}  $\sdepth_SI_{b}/J_b\geq d+2$ for a $b\in (B\cap (f_1))\setminus (f_2)$.
\end{enumerate}
Then either  $\sdepth_SI/J\geq d+2$,  or there exists a nonzero   ideal $I'\subsetneq I$ generated  by a subset of $\{f_1,f_2\}\cup B$  such that  $\sdepth_S I'/J'\leq d+1$ for  $J'=J\cap I'$ and $\depth_SI/(J,I')\geq d+1$.
\end{Lemma}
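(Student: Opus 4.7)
The plan is to test whether the partition $P_b$ on $I_b/J_b$ furnished by hypothesis (3) can be extended to a partition of $I/J$ of sdepth $\geq d+2$; success puts us in the first alternative of the conclusion, while failure will isolate the ideal $I'$. Fix such a $P_b$: it consists of a single block $[f_2, f_2 x_{i_2} x_{j_2}]$ together with intervals $[b', c_{b'}]$ for every $b' \in B \setminus \{b, f_2 x_{i_2}, f_2 x_{j_2}\}$, so that the squarefree monomials of $I/J$ of degrees $d$ and $d+1$ left uncovered are exactly $f_1$ and $b$. Condition (1) guarantees $|C \setminus \Im h| \geq q - (s-2) \geq 0$, leaving a reservoir of candidate endpoints, while any squarefree monomials of degree $> d+2$ arising from $E$ remain unassigned. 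The natural extension is a single interval $[f_1, c_1]$ with $c_1 = b x_j = f_1 x_i x_j \in C$ and $f_1 x_j \in B$; it would cover exactly $\{f_1, b, f_1 x_j, c_1\}$ and raise the overall sdepth to $d+2$.

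The case analysis is driven by condition (2): each $c \in C$ with $f_1 \mid c$ falls into one of three families — divisible by $w_{12}$, of the form $f_1 \cdot (\text{factor from some } a \in E)$, or a product of factors from two distinct elements of $E$. In each family I check whether a candidate $c_1$ is available outside $\Im h$ and whether $f_1 x_j$ is free, a condition met immediately when $f_1 x_j \in \{f_2 x_{i_2}, f_2 x_{j_2}\}$ and otherwise to be secured via a finite swap chain that removes the $P_b$-interval containing $f_1 x_j$ and relocates its endpoint to an unused $c' \in C \setminus \Im h$, iterating until the chain closes. The reservoir supplied by condition (1) is what lets these chains terminate.

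If every swap chain loops back on itself, the failure pattern pins down a minimal obstructing subset $B_0 \subseteq B \cap (f_1)$ containing $b$, together with companion endpoints $C_0 \subseteq C \cap (f_1)$ exhibiting the strict local imbalance $|B_0| \geq |C_0| + 2$. I take $I' := (f_1) + (B_0)$, which is a subset of $\{f_1, f_2\} \cup B$ and properly contained in $I$ since $f_2$ never enters the obstructing chain (it is absorbed by its own $P_b$-block from the outset). The local imbalance forces $\sdepth_S I'/J' \leq d+1$ by a direct counting argument inside $I'$, and $I/(J, I')$ is generated modulo $J+I'$ by $f_2$ together with whatever $E$-monomials avoid $I'$; condition (2) keeps these generators in a shape where Lemma \ref{e}, applied after localizing at the appropriate variable, yields $\depth_S I/(J, I') \geq d+1$. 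The main obstacle will be the swap-chain bookkeeping: ensuring that the failure organizes into one coherent $B_0$ rather than several incompatible tangles, and threading the counting so that the identified $I'$ simultaneously satisfies both $\sdepth_S I'/J' \leq d+1$ and $\depth_S I/(J,I') \geq d+1$ without any over-counting of $C$-elements.
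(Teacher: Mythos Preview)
Your swap-chain mechanism is close in spirit to the paper's notion of \emph{paths} in $P_b$, so the engine of the argument is right. But the construction of $I'$ and both of the bounds you claim for it are off in ways that break the proof.

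First, your $I'$ is taken the wrong way round. You set $I'=(f_1)+(B_0)$ with $B_0\subseteq B\cap(f_1)$, so in fact $I'=(f_1)$; the local imbalance $|B_0|\geq |C_0|+2$ you extract lives on a \emph{subset} of the degree-$(d{+}1)$ and degree-$(d{+}2)$ monomials of $(f_1)\setminus(J\cap(f_1))$, and that does not bound $\sdepth_S (f_1)/(J\cap(f_1))$, because a Stanley partition of $(f_1)/(J\cap(f_1))$ is free to use endpoints in $C\cap(f_1)$ outside your $C_0$. The paper does the opposite: it lets $T_1$ be the set of $b'\in B$ reachable by paths and takes $I'$ generated by the \emph{complement} $G_1=B\setminus T_1$ (together with whichever of $f_1,f_2$ are not hit by $T_1$). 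Then $I/(J,I')$ is exactly the reachable piece, which inherits a partition of sdepth $\geq d+2$ from $P_b$ (after the interval swaps you also describe). The inequality $\sdepth_S I'/J'\leq d+1$ is not obtained by counting at all; it drops out of the dichotomy via Rauf's lemma \cite[Lemma~2.2]{R}: if $I'/J'$ also had sdepth $\geq d+2$, the exact sequence $0\to I'/J'\to I/J\to I/(J,I')\to 0$ would force $\sdepth_S I/J\geq d+2$, and we are in the first alternative.

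Second, the depth bound. You invoke Lemma~\ref{e} to get $\depth_S I/(J,I')\geq d+1$, but Lemma~\ref{e} concludes depth \emph{equal to} $d$, not $\geq d+1$, so it cannot do this job. The paper instead argues case by case: when both $f_1,f_2\in I'$ the quotient is generated in degree $\geq d+1$ and the bound is immediate; when $f_2\notin I'$ one shows via the exact sequence $0\to (f_2)/((J,I')\cap(f_2))\to I/(J,I')\to I/(J,I',f_2)\to 0$ that both ends have depth $\geq d+1$, using that the carefully chosen $h(b_1)\notin(w_{12})$ survives in the last term. This choice of $b_1,b_2$ with $h(b_1)\notin(w_{12})$ is exactly what makes the intervals $[f_1,h(b_1)]$, $[f_2,h(b_2)]$ disjoint and simultaneously makes the Depth Lemma go through; your sketch does not isolate this step.
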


\begin{proof}

 Since  $\sdepth_SI_{b}/J_b\geq d+2$ we consider  $h$ as above for a partition $P_b$  with sdepth $d+2$ of $I_b/J_b$. We have an interval $[f_2,c'_2]$ in $P_b$. Suppose that $B\cap [f_2,c'_2]=\{u,u'\}$.
 A sequence $a_1,\ldots, a_k$ is called a {\em path} from $a_1$ to $a_k$ if the following statements hold:

   (i) $a_l\in B\setminus \{b,u,u'\}$, $l\in [k]$,

   (ii) $a_l\not=a_j$ for $1\leq l<j\leq k$,

   (iii) $a_{l+1}|h(a_l)$ and $h(a_l)\not \in (b)$ for all $1\leq l<k$.

    This path is {\em weak}  if $h(a_j)\in (u,u')$ for some $j\in [k]$. It is {\em bad} if $h(a_k)\in (b)$ and it is {\em maximal} if either $h(a_k)\in (b)$, or all divisors from $B$ of $h(a_k)$ are in $\{u,u',a_1,\ldots,a_k\}$. If $a=a_1$ we say that the above path {\em starts with} $a$.

  By hypothesis $s\geq 4$ and so there exists
 $a_1\in B\setminus \{b,u,u'\}$. Set $c_1 = h( a_1)$.
  If $c_1\in (b)$ then the path $\{a_1\}$ is  maximal and bad.  We construct below, as an example,  a path with $k>1$. By recurrence choose if possible $a_{p+1}$  to be a divisor from $B$ of $c_p$, which is not in $\{b,u,u',a_1,\ldots,a_{p}\}$ and set $c_p=h(a_p)$, $ p \geq 1$. This construction ends at step $p=e$ if all divisors from $B$ of $c_{e}$ are in $\{b,u,u',a_1,\ldots,a_{e}\}$. If $c_i\not\in (b)$ for $1\leq i<e$ then $\{a_1,\ldots,a_{e}\}$ is a maximal path. If one $c_p\in  (u,u')$ then the constructed path is weak. If $c_{e}\in (b)$ then this path is  bad.
   We have three cases:

1) there exist  no weak path and no  bad path starting with $a_1$,

2) there exists a  weak path starting with $a_1$ but no  bad path starts with $a_1$,

3) there exists a  bad path starting with $a_1$.

 In {\bf the first case}, set $T_1=\{b'\in B: \mbox{there\ \ exists\ \ a\ \ path}\ \ a_1,\ldots,a_k\ \  \mbox{with}\ \ a_k=b'\}$, $G_1=B\setminus T_1$,
and  $I'_1 = (f_1,G_1)$, $I'_2 = (f_2,G_1)$, $I'_{12} = (f_1,f_2,G_1)$, $I'' = (G_1)$,  $J'_1 = I'_1 \cap J$, $J'_2 = I'_2 \cap J$,  $J'_{12} = I'_{12} \cap J$, $J'' = I''\cap J$. Note that $I''\not =0$ because $b\in I''$ and all divisors from $B$ of a monomial $c\in U_1=h(T_1)$ belong to $T_1$.  Consider the following exact sequence
$$0\to I'_{12}/J'_{12}\to I/J\to I/(J,I'_{12})\to 0.$$
If $U_1\cap (f_1,f_2)=\emptyset$ then the last  term has depth $\geq d+1$ and sdepth $\geq d+2$ using the restriction of $P_b$ to $(T_1)\setminus (J,I'_{12})$ since $h(b') \notin I'_{12}$, for all $ b' \in T_1$. When the first term has sdepth $\geq d+2$ then by  \cite[Lemma 2.2]{R}  the middle term  has  sdepth  $\geq d+2$. Otherwise, the first term has sdepth $\leq d+1$ and we may take $I'=I'_{12}$.

If $U_1\cap (f_1)=\emptyset$, but $b_2\in T_1\cap (f_2)$,  then in the following exact sequence
$$0\to I'_1/J'_1\to I/J\to I/(J,I'_1)\to 0$$
the last term has sdepth $\geq d+2$   since $h(b') \notin I'_{1}$, for all $ b' \in T_1$ and we may substitute the interval $[b_2,h(b_2)]$  from the restriction of $P_b$ by $[f_2,h(b_2)]$, the second monomial from  $[f_2,h(b_2)]\cap B$ being also in $T_1$. As above we get either $\sdepth_S I/J \geq d+2$, or  $\sdepth_S I'_1/J'_1 \leq d+1$,  $\depth_S I/(J,I'_1) \geq d+1$. Similarly, we do when $U_1\cap (f_2)=\emptyset$ but  $U_1\cap (f_1)\not=\emptyset$.

Now, suppose that $b_{1}\in T_1\cap (f_1)$ and $b_{2}\in T_1\cap (f_2)$. We claim to choose $b_1\not=b_2$ and such  that one from $h(b_{1}),h(b_{2})$ is not in $(w_{12})$, let us say $h(b_{1})\not\in (w_{12})$. Indeed, if $w_{12} \not\in B$ and $h(b_1),h(b_2)\in (w_{12})$  then necessarily  $h(b_1)=h(b_2)$ and it follows $b_1=b_2=w_{12}$ which is false.  Suppose that $w_{12}\in B$ and $h(b_2)=x_jw_{12}$. Then choose $b_1=x_jf_1\in T_1$. If $h(b_1)\in (w_{12})$ then we get
$h(b_1)=h(b_2)$ and so $b_1=b_2=w_{12}$ which is impossible.

In the following exact sequence
$$0\to I''/J''\to I/J\to I/(J,I'')\to 0$$ the last term has sdepth $\geq d+2$
 since we may replace the intervals $[b_{1},h(b_{1})]$, $[b_{2},h(b_{2})]$  of the restriction of $P_b$ to $(T_1)\setminus (J,I'')$ with the disjoint intervals
$[f_1,h(b_{1})]$, $[f_2,h(b_{2})]$. Also the last term has depth $\geq d+1$ because in the exact sequence
 $$0\to (f_2)/(J,I'')\cap (f_2)\to I/(J,I'')\to I/(J,I'',f_2)\to 0$$
 the end terms have depth $\geq d+1$ since $h(b_{1})\not \in (f_2)$, otherwise $h(b_{1}) \in (w_{12})$, which is false.
 As above we get either $\sdepth_S I/J \geq d+2$, or  $\sdepth_S I''/J'' \leq d+1$,  $\depth_S I/(J,I'') \geq d+1$.

 In {\bf the second case}, let  $a_1,\ldots, a_{t_1}$ be a weak path and set $c_j=h(a_j)$ for $j\in [t_1]$. We may suppose that $c_{t_1}\in (u)$, otherwise take a shorter path. Denote $T_1$, $U_1$ as in the first case, which we keep it fix even we will change a little $h$. Suppose that $a_{t_1}\in (f_2)$. Then change in $P_b$ the intervals $[a_{t_1},c_{t_1}]$, $[f_2,c'_2]$ by $[f_2,c_{t_1}]$, $[u',c'_2]$. Thus the new $c'_2$ is among $\{c_1,\ldots,c_{t_1}\}\subset U_1$, though the old $c'_2\not\in U_1$. Also the new $u'$ is in  $T_1$. However, if the old $u'$ is not a divisor of a $c$ from  $ U_1$, then the proof goes as in the first case using  $T'_1=T_1\cup \{u\}$, $G'_1=B\setminus T'_1$ with $I'=I'_2$, or $I'=I'_{12}$. Otherwise, $T'_1$ should be completed because $u'$ is not now in $[f_2,c'_2]$ and we may consider some paths starting with $u'$.
 Note that there exists a path from $a_1$ to $u'$ since  $u'$ is a divisor of a monomial from $ U_1$. It follows that there exist no bad path starting with $u'$. Take ${\tilde T_1}=T'_1\cup \{b'\in B: \mbox{there\ \ exists\ \ a\ \ path\ \ from}\ \ u'\ \  \mbox{to}\ \ b'\}$ and the proof goes as above with $\tilde T_1$ instead $T'_1$, that is with  $I'$ generated by a subset of $ \{f_1,f_2\}\cup {\tilde G}_1$ for ${\tilde G}_1=B\setminus {\tilde T}_1$.

Now suppose that $a_{t_1}\not\in (f_2)$ but there exists $1\leq v<t_1$ such that $a_v\in (f_2)$ and $a_v|c_{t_1}$.
 Then we may replace in $P_b$ the intervals $[a_{p},c_{p}], v \leq p \leq t_1$ with the intervals $[a_v,c_{t_1}],[a_{p+1},c_p], v \leq p < t_1$. The old $c_{t_1}$ becomes the new $c_v$, that is we reduce to the case when $u$ divides $c_v$ and $a_v\in (f_2)$, subcase solved above.

Remains to study the subcase when there exist no $a_v\in (f_2)$, $1\leq v\leq t_1$ with $a_v|c_{t_1}$. Then there exists an $a_{t_1+1}\in B\cap (f_2)$, $a_{t_1+1}\not= u$ such that $a_{t_1+1}|c_{t_1}$. Clearly, $a_{t_1+1}\not=u'$ because otherwise $c'_2=c_{t_1}$. We have two subcases:

$1')$ there exists a  path $a_{t_1+1},\ldots,a_l$ such that $h(a_l)\in (a_{v'})$ for some $1\leq v'\leq t_1$,

$2')$  for any  path  $a_{t_1+1},\ldots,a_p$, any $h(a_j)$, $t_1<j\leq p$ does not belong to $(a_1,\ldots,a_{t_1})$.

In the first subcase, we replace in $P_b$ the intervals $[a_{j},c_{j}], v' \leq j \leq l$ with the intervals $[a_{v'},c_{l}],[a_{j+1},c_j], v' \leq j < l$. The new $h(a_{t_1+1})$ is the old $c_{t_1}$ and we may proceed as above. In the second case, we set $$T_2=  \{b'\in B: \mbox{there\ \ exists\ \ a\ \ path\ \ from}\ \ a_{t_1+1}\ \  \mbox{to}\ \ b'\}.$$ Note that any path starting from $a_{t_1+1}$ can be completed to a path from $a_1$ by adding  the monomials $a_1,\ldots,a_{t_1}$. Thus there exists no bad path starting with $a_{t_1+1}$, otherwise we can get one starting from $a_1$, which is false.

 If there exists no weak path starting with  $a_{t_1+1}$ then we proceed as in the first case with $T_2$ instead $T_1$. If there exists a weak path starting with $a_{t_1+1}$ then we proceed as above in case 2) with $T'_2$, or ${\tilde T}_2$ instead $T'_1$, or ${\tilde T}_1$, except in the  subcase $2')$ when we will define similarly a $T_3$ given by the paths starting with a certain $a_{t_2+1}$.  Note that the whole set $\{a_1,\ldots,a_{t_2}\}$ has different monomials. After
several such steps we must arrive in the case $p=t_m$ when $\{a_1,\ldots,a_{t_m}\}$ has different monomials and the subcase $2')$ does not appear. We end this case using $T_m$, or $T'_m$, or ${\tilde T}_m$ instead $T_1$, or $T'_1$, or ${\tilde T}_1$.

In {\bf the third case}, let $a_1,\ldots,a_{t_1}$ be a  bad path starting with $a_1$. Set $c_j=h(a_j)$, $j\in [t_1]$.
Then $c_{t_1}=bx_{l_1}$ and let us say $b=f_1x_i$.
 If $a_{t_1}\in (f_1)$ then changing in $P_b$ the interval $[a_{t_1},c_{t_1}]$ by $[f_1,c_{t_1}]$ we get a partition on $I/J$ with sdepth $d+2$. Thus we may assume that $a_{t_1}\not\in (f_1)$. If $f_1x_{l_1}\in \{a_1,\ldots,a_{t_1-1}\}$, let us say   $fx_{l_1}=a_v$, $1\leq v<t_1$ then we may replace in $P_b$ the intervals $[a_{p},c_{p}], v \leq p \leq t_1$ with the intervals $[a_v,c_{t_1}],[a_{p+1},c_p], v \leq p < t_1$. Now we see that we have in $P_b$ the interval $[f_1x_{l_1},f_1x_ix_{l_1}]$ and switching it with the interval $[f_1,f_1x_ix_{l_1}]$ we get a partition with sdepth $\geq d+2$ for $I/J$.

Thus we may assume that $f_1x_{l_1} \notin \{ a_1,...,a_{t_1} \}$. Now set $a_{t_1+1}=fx_{l_1}$. Let \\ $a_{t_1+1},\ldots,a_k$ be a path starting with  $a_{t_1+1}$ and set $c_j=h(a_j)$, $t_1<j\leq k$.  If $a_{p}=a_v$ for $v\leq t_1$, $p>t_1$ then  change in $P_b$ the intervals $[a_j,c_j], v \leq j \leq p$ with the intervals $[a_v,c_{p}],[a_{j+1},c_j], v \leq j< p$. We have in $P_b$ an interval $[f_1x_{l_1},f_1x_ix_{l_1}]$ and switching it to $[f_1,f_1x_ix_{l_1}]$ we get a partition  with sdepth $\geq d+2$ for $I/J$. Thus we may suppose that in fact  $a_{p}\not \in \{b,a_1,\ldots, a_{p-1}\}$ for any $p>t_1$ (with respect to any path starting with $a_{t_1+1}$).
 We have  three subcases:

$1'')$ there exist  no weak path and no  bad path starting with $a_{t_1+1}$,

$2'')$ there exists a  weak path starting with $a_{t_1+1}$ but no  bad path starts with $a_{t_1+1}$,

$3'')$ there exists a  bad path starting with $a_{t_1+1}$.

  Set $T_2=\{b'\in B: \mbox{there\  exists\ a\  path}\ \ a_{t_1+1},\ldots,a_{k}\ \ \mbox{with}\ \ a_k=b'\}$, $G_2=B\setminus T_2$, $U_2=h(T_2)$ in the first subcase, and  see that $I'$ generated by a subset of $\{f_1,f_2\}\cup G_2$ chosen as above works.

  In the second subcase, let  $a_{t_1+1},\ldots, a_k$ be a weak path and set $c_j=h(a_j)$ for $t_1<j\leq k$. We may suppose that $c_k\in (u)$.   Changing $P_b$ we may suppose that the new $c'_2$ is in $U_2$ as above.
    If the old $u'$ was not a divisor of a $c\in U_2$ then the proof goes as in the first case  with $T'_2=T_2\cup \{u\}$, $I'=I'_2$. Otherwise, $T'_2$ should be completed to a ${\tilde T_2}$ similar  to ${\tilde T_1}$. The proof goes as above with $\tilde T_2$ instead $T'_2$.

  In the third subcase,
let $a_{t_1+1},\ldots,a_{t_2}$ be a  bad path starting with $a_{t_1+1}$ and set $c_j=h(a_j)$ for $j>t_1$. We saw that the whole set $\{a_1,\ldots,a_{t_2}\}$ has different monomials. As above
$c_{t_2}=bx_{l_2}$ and  we may reduce to the case when $f_1x_{l_2}\not \in \{a_1,\ldots,a_{t_1}\}$. Set $a_{t_2+1}=f_1x_{l_2}$ and again we consider three subcases, which we treat as above. Anyway after
several such steps we must arrive in the case $p=t_m$ when $b|c_{t_m}$ and again a certain $f_1x_{l_m}$ is not among $\{a_1,\ldots,a_{t_m}\}$ and  there exist no bad path starting with $a_{t_m+1}=f_1x_{l_m}$. This follows since  we may reduce to the case when the set $\{a_1,\ldots,a_{t_m}\}$ has different monomials and so the procedures should stop for some $m$.
 Finally, using $$T_m=\{b'\in B: \mbox{there\  exists\ a\  path}\ \ a_{t_m+1},\ldots,a_{k}\ \ \mbox{with}\ \ a_k=b'\}$$ (resp. $T'_m$, or ${\tilde T}_m$) as $T_1$ (resp. $T'_1$, or ${\tilde T}_1$) above we are done.
 \end{proof}

\begin{Lemma}\label{ml2} Suppose that the following conditions hold:
\begin{enumerate}
\item{} $r=3$, $6\leq s\leq q+3$,
\item{} $C\subset (\cup_{i,j\in [3],i\not=j} (f_i)\cap (f_j))\cup ((E)\cap (f_1,f_2,f_3))\cup (\cup_{a,a'\in E, a\not =a'}(a)\cap (a'))$,
\item{}  There exists $b\in (B\cap (f_1))\setminus (f_2,f_3)$ such that $\sdepth_SI_{b}/J_b\geq d+2$.
\end{enumerate}
Then either  $\sdepth_SI/J\geq d+2$, or there exists a nonzero   ideal $I'\subsetneq I$ generated  by a subset of $\{f_1,f_2,f_3\}\cup B$  such that  $\sdepth_S I'/J'\leq d+1$ for  $J'=J\cap I'$ and $\depth_SI/(J,I')\geq d+1$.
\end{Lemma}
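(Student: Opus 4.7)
The plan is to adapt the three-case path-chasing argument of Lemma~\ref{ml1} to accommodate the new generator~$f_3$. Fix a partition $P_b$ of $I_b/J_b$ with $\sdepth P_b = d+2$, granted by hypothesis~(3). Among its intervals are $[f_2, c'_2]$ and $[f_3, c'_3]$; write $B \cap [f_2, c'_2] = \{u, u'\}$ and $B \cap [f_3, c'_3] = \{v, v'\}$. For every remaining $b' \in B \setminus \{b, u, u', v, v'\}$ the partition supplies an interval $[b', h(b')]$ with $h(b') \in C$, yielding an injection $h$ whose domain has at least $s - 5 \ge 1$ elements (since $s \ge 6$).

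Redefine paths, weak paths, bad paths and maximal paths as in Lemma~\ref{ml1}, but with the excluded set enlarged to $\{b, u, u', v, v'\}$ and a weak path being one whose image lands in $(u, u', v, v')$. Fix $a_1 \in B \setminus \{b, u, u', v, v'\}$ and distinguish three cases: no weak and no bad path starts at $a_1$; a weak but no bad path does; a bad path does. In each case put $T_1 = \{b' \in B : \text{some path from } a_1 \text{ ends at } b'\}$, $G_1 = B \setminus T_1$, $U_1 = h(T_1)$, and record which of $(f_1), (f_2), (f_3)$ are met by $U_1$. Build $I'$ as an ideal of the form $(\{f_i : i \in S\} \cup G_1)$ with $S \subseteq \{1,2,3\}$, augmenting $G_1$ by the appropriate elements among $\{u, u', v, v'\}$ in the weak/bad cases exactly as $T'_1$ and $\tilde T_1$ are defined in Lemma~\ref{ml1}. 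The exact sequence
\[
0 \to I'/J' \to I/J \to I/(J, I') \to 0,
\]
combined with the Depth Lemma, \cite[Lemma 2.2]{R}, and hypothesis~(2), then yields either $\sdepth_S I/J \ge d+2$ (when $\sdepth_S I'/J' \ge d+2$) or the required ideal $I' \subsetneq I$.

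The main new obstacle appears in the first case when $U_1$ meets all three of $(f_1), (f_2), (f_3)$: one must pick pairwise distinct $b_i \in T_1 \cap (f_i)$ whose images $h(b_i)$ do not mutually lie in any single $(w_{ij})$, so that replacing the intervals $[b_i, h(b_i)]$ in $P_b$ by $[f_i, h(b_i)]$ produces genuinely disjoint intervals. The pairwise trick of Lemma~\ref{ml1}---swapping $b_i$ for $x_l f_i$ when $w_{ij} \in B$, and observing that $h(b_i) = h(b_j) \in (w_{ij})$ with $w_{ij} \notin B$ would force $b_i = b_j = w_{ij}$---resolves each pair, but now three pairs $(i,j) \in \{\{1,2\},\{1,3\},\{2,3\}\}$ must be handled simultaneously, requiring a small combinatorial case split on which subset of $\{w_{12}, w_{13}, w_{23}\}$ lies in $B$. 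The weak- and bad-path cases then follow the iterative template of Lemma~\ref{ml1}: each time a weak or bad path terminates, reroute $P_b$ and, if necessary, launch a fresh path from a new monomial $a_{t_k+1}$; finiteness of $B$ together with the distinctness of the accumulated path monomials guarantees termination after finitely many steps, at which point the analysis of case~(i) applies to the final $T_m$, $T'_m$ or $\tilde T_m$.
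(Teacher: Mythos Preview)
Your outline follows the paper's overall strategy---the same three-case path split and the same use of the exact sequence $0\to I'/J'\to I/J\to I/(J,I')\to 0$---but it glosses over precisely the place where the passage from $r=2$ to $r=3$ is nontrivial, and the gap is not just missing detail.

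The trouble is entirely in your treatment of case~2 (weak paths). In Lemma~\ref{ml1} there is a \emph{single} pair $\{u,u'\}$ coming from the one interval $[f_2,c'_2]$; a weak path always lands in $(u,u')$, and rerouting $P_b$ to absorb that landing is a local operation. Here there are two pairs $\{u_2,u'_2\}$ and $\{u_3,u'_3\}$ from the intervals $[f_2,c'_2]$ and $[f_3,c'_3]$, and weak paths can hit either. Rerouting to move the new $c'_2$ into $U_1$ may force $[f_2,c'_2]$ and $[f_3,c'_3]$ to intersect, or may force the new $c'_2$ into $(f_1)$ in a way that blocks the later choice of $b_1$. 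The paper handles this by first normalising $P_b$ to satisfy a property~$(*)$ (controlling $h(w_{ij})$ relative to $(u_i,u'_i,u_j,u'_j)$), and then running through a substantial hierarchy of subcases: $U_1\cap(f_3)=\emptyset$; then $h(b_2)\in(u_2)$, $h(b_3)\in(f_3)$ but $h(T_1)\cap(u_3,u'_3)=\emptyset$; then $h(b_2)\in(u_2)$ and $h(b_3)\in(u_3)$ simultaneously. In the last subcase the naive rerouting can fail outright---Example~\ref{bad} shows a situation where $h(b_2)$ has \emph{no} divisor in $B\cap(f_2)\setminus\{u_2,u'_2,u_3,u'_3\}$ and no other $c\in U_1$ is a multiple of $u_2$, so one is forced to keep the old $c'_2$, enlarge $T_1$ to include $\{u_2,u'_2\}$, and possibly feed back into case~3. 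None of this is captured by saying the weak-path case ``follows the iterative template of Lemma~\ref{ml1}.'' You need the reduction to $(*)$ and you need the interaction analysis between the two targets; without them the argument does not close.

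A smaller but related omission: in case~1 with $m=0$, disjointness of the intervals $[f_j,h(b_j)]$ is not by itself enough; you still have to prove $\depth_S I/(J,I'_0)\ge d+1$. The paper does this by a two-step filtration through $(f_3)$ and $(f_2)$, using the specific non-membership conditions $h(b_2)\notin(w_{23})$ and $h(b_1)\notin(w_{12})$ (or a divisor trick when $h(b_1)\in(w_{13})$). Your sketch invokes hypothesis~(2) and the Depth Lemma, but hypothesis~(2) only constrains $C$, not the quotient $I/(J,I'_0)$; the actual argument depends on the particular choices of $b_1,b_2,b_3$.
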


\begin{proof}
We follow the proof of Lemma \ref{ml1}.
 Since  $\sdepth_SI_{b}/J_b\geq d+2$ we consider  $h$ as above for a partition $P_b$  with sdepth $d+2$ of $I_b/J_b$. We have two intervals $[f_2,c'_2]$, $[f_3,c'_3]$ in $P_b$. Suppose that $B\cap [f_i,c'_i]=\{u_i,u'_i\}$, $1<i\leq 3$ .
 As in Lemma \ref{ml1} we define   a  path $a_1,\ldots, a_k$ from $a_1$ to $a_k$ and a bad path.
The above path is {\em weak}  if $h(a_j)\in (u_2,u'_2,u_3,u'_3)$ for some $j\in [k]$. It is {\em maximal} if either $h(a_k)\in (b)$, or all divisors from $B$ of $h(a_k)$ are in $\{b,u_2,u'_2,u_3,u'_3,a_1,\ldots,a_k\}$.

By hypothesis $s\geq 6$ and there exists
$a_1\in B\setminus \{b,u_2,u_2',u_3,u'_3\}$. Set $c_1 = h( a_1)$.   If $c_1\in (b)$ then the path $\{a_1\}$ is  maximal and bad.  We construct below a path with $k>1$. By recurrence choose if possible $a_{p+1}$  to be a divisor from $B$ of $c_p$, which is not in $\{b,u_2,u'_2,u_3,u_3',a_1,\ldots,a_{p}\}$ and set $c_p=h(a_p)$, $ p \geq 1$. This construction ends at step $p=e$ if all divisors from $B$ of $c_{e}$ are in $\{b,u_2,u'_2,u_3,u_3',a_1,\ldots,a_{e}\}$.
  If $c_j\not\in (b)$ for $1\leq j<e$ then $\{a_1,\ldots,a_{e}\}$ is a maximal path. If one $c_p\in  (u_2,u'_2,u_3,u'_3)$ then the constructed path is weak. If $c_{e}\in (b)$ then this path is  bad.

  We  may reduce to the situation when $P_b$ satisfies the following property:

 $ (*)$ For all $1\leq i<j\leq 3$ if   $w_{ij}\in B\setminus \{u_2,u'_2,u_3,u'_3\}$, $1\leq i<j\leq 3$ then $h(w_{ij}) \not\in (u_i,u'_i,u_j,u'_j)$ if $i>1$.

Indeed, suppose that  $w_{ij}\in B\setminus\{u_2,u'_2,u_3,u'_3\}$ and $h(w_{ij})\in (u_j)$. Then  $h(w_{ij})=x_lw_{ij}$ for some $l\not\in \supp w_{ij}$ and   we must have let us say $u_j=x_lf_j$. Changing in $P_b$ the intervals $[f_j,c'_j]$, $[w_{ij},h(w_{ij})]$ with $[f_j,h(w_{ij})]$, $[u'_j,c'_j]$ we see that we may assume $u_j=w_{ij}$. Suppose that $(*)$ holds.
   We have three cases:

1) there exist  no weak path and no  bad path starting with $a_1$,

2) there exists a  weak path starting with $a_1$ but no  bad path starts with $a_1$,

3) there exists a  bad path starting with $a_1$.

 In {\bf the first case}, set $T_1=\{b'\in B: \mbox{there\ \ exists\ \ a\ \ path}\ \ a_1,\ldots,a_t\ \  \mbox{with}\ \ a_t=b'\}$, $G_1=B\setminus T_1$,
 and  for $k=(k_1,\ldots,k_m)$, $1\leq k_1<\ldots k_m\leq 3$, $0\leq m\leq 3$ set $I'_k = (f_{k_1},\ldots,f_{k_m},G_1)$,  $J'_k = I'_k \cap J$, and  $I'_0 = (G_1)$,  $J'_0 = I'_0\cap J$ for $m=0$. Note that  all divisors from $B$ of a monomial $c\in U_1=h(T_1)$ belong to $T_1$, and  $I'_0\not =0$ because $b\in I'_0$.  Consider the following exact sequence
$$0\to I'_{k}/J'_{k}\to I/J\to I/(J,I'_{k})\to 0.$$
If $U_1\cap (f_1,f_2,f_3)=\emptyset$ then the last  term of the above exact sequence given for $k=(1,2,3)$ has depth $\geq d+1$ and sdepth $\geq d+2$ using the restriction of $P_b$ to $(T_1)\setminus (J,I'_k)$ since $h(b') \notin I'_{k}$ , for all $ b' \in T_1$. When the first term has sdepth $\geq d+2$ then by  \cite[Lemma 2.2]{R}  the middle term  has  sdepth  $\geq d+2$ which is enough.

If $U_1\cap (f_1,f_2)=\emptyset$, but there exists $b_3\in T_1\cap (f_3)$,  then set $k=(1,2)$. In the following exact sequence
$$0\to I'_k/J'_k\to I/J\to I/(J,I'_k)\to 0$$
the last term has sdepth $\geq d+2$   since $h(b') \notin I'_{k}$, for all $ b' \in T_1$ and we may substitute the interval $[b_3,h(b_3)]$  from the restriction of $P_b$ by $[f_3,h(b_3)]$, the second monomial from  $[f_3,h(b_3)]\cap B$ being also in $T_1$. As above we get either $\sdepth_S I/J \geq d+2$, or  $\sdepth_S I'_1/J'_1 \leq d+1$,  $\depth_S I/(J,I'_1) \geq d+1$.

Now, we omit other subcases considering  only the worst subcase $m=0$. Let  $b_{1}\in T_1\cap (f_1)$, $b_{2}\in T_1\cap (f_2)$ and $b_{3}\in T_1\cap (f_3)$. For $1\leq l<j\leq 3$ we claim that we may  choose $b_l\not=b_j$ and such  that one from $h(b_{l}),h(b_{j})$ is not in $(w_{lj})$. Indeed, if $w_{lj} \not\in B$ and $h(b_l),h(b_j)\in (w_{lj})$  then necessarily  $h(b_l)=h(b_j)$ and it follows $b_l=b_j=w_{lj}$, which is false.  Suppose that $w_{lj}\in B$ and $h(b_j)=x_pw_{lj}$. Then choose $b_l=x_pf_l\in T_1$. If $h(b_l)\in (w_{lj})$ then we get
$h(b_l)=h(b_j)$ and so $b_l=b_j=w_{lj}$, which is impossible.

Therefore, we may choose $b_j$ such that $ h(b_{1})\not\in (w_{12})$, $h(b_{2})\not\in (w_{23})$.
 Note that it is possible that $f_1|c$ for some $c\in h(T_1)$ even $b\not |c$ for any $c\in U_1$.
 If $h(b_{1})\in (w_{13})$ then we may also choose $h(b_{3})\not\in (w_{13})$. In the case when $h(b_{1})\not\in (w_{13})$, choose any $b_3 \in T_1\cap (f_3)$ different from the others $b_j$.  We conclude that the possible intervals $[f_j,h(b_{j})]$, $j\in [3]$ are disjoint.
 Next we  change the intervals $[b_{j},h(b_{j})]$, $j\in [3]$  from the restriction of $P_b$ to $(T_1)\setminus (J,I'_0)$ by $[f_j,h(b_{j})]$, the second monomial from  $[f_j,h(b_{j})]\cap B$ being also in $T_1$.
 We claim that $I/(J,I'_0)$  has depth $\geq d+1$. Indeed, in the following  exact sequence
 $$0\to (f_2)/(f_2)\cap (J,I'_0,f_3)\to I/(J,I'_0,f_3)\to I/(J,I'_0,f_2,f_3)\to 0$$
 the first term has depth $\geq d+1$ because $h(b_{2})\not\in (f_2)\cap (f_3)$. If $h(b_{1})\not\in (f_3)$ then  $h(b_{1})\not \in(f_2,f_3)\cap (f_1)$ and so the last term has depth $\geq d+1$. If   $h(b_{1})\in (w_{13})$ then we may find a $b'\in (B\cap (f_1))\setminus (f_3)$ dividing $h(b_1)$. It follows that $b'\in T_1$ and  $b'\not\in  (f_2,f_3)\cap (f_1)$, which implies  that  the last term has again depth $\geq d+1$. Thus   $\depth_S I/(J,I'_0,f_3)\geq d+1$ by the Depth Lemma. Our claim follows from
 the exact sequence
 $$0\to (f_3)/(f_3)\cap (J,I'_0)\to I/(J,I'_0)\to I/(J,I'_0,f_3)\to 0$$
because the first term has depth $\geq d+1$. Therefore, as
above we get either \\ $\sdepth_S I/J \geq d+2$, or  $\sdepth_S I'_0/J'_0 \leq d+1$,  $\depth_S I/(J,I'_0) \geq d+1$.

 In {\bf the second case}, let  $a_1,\ldots, a_{t_1}$ be a weak path and set $c_j=h(a_j)$ for $j\in [t_1]$. We may suppose that $c_{t_1}\in (u_2)$, otherwise take a shorter path.   Denote $T_1$, $U_1$ as in the first case. First {\bf consider the subcase when} $U_1\cap (f_3)=\emptyset$. Suppose that $a_{t_1}\in (f_2)$. Then change in $P_b$ the intervals $[a_{t_1},c_{t_1}]$, $[f_2,c'_2]$ by $[f_2,c_{t_1}]$, $[u'_2,c'_2]$. Thus the new $c'_2$ is among $\{c_1,\ldots,c_{t_1}\}\subset U_1$, though the old $c'_2\not\in U_1$. If the old $u'_2$ is not a divisor of any $c\in U_1$ then the proof goes as in the first case  with $T'_1=T_1\cup \{u_2\}$. If the old   $u'_2$ is a divisor of a monomial from $ U_1$ then $T'_1$ should be completed because the old $u'_2$ is not now in $[f_2,c'_2]$. Note that there exists a path from $a_1$ to $u'_2$ since  $u'_2$ is a divisor of a monomial from $ U_1$. It follows that there exist no bad path starting with $u'_2$. It is worth to mention that the old $c'_2$ is now in $U_1$ and we should consider all pathes starting with divisors of $c'_2$ from $B$. Take ${\tilde T_1}=T'_1\cup \{b'\in B: \mbox{there\ \ exists\ \ a\ \ path\ \ from}\ \ u'_2\ \  \mbox{to}\ \ b'\}$ and the proof goes as above with $\tilde T_1$ instead $T'_1$, that is with $I'$ generated by a subset of  $\{f_1,f_2,f_3\}\cup {\tilde G}_1$, where ${\tilde G}_1=B\setminus {\tilde T}_1$.

Now suppose that $a_{t_1}\not\in (f_2)$ but there exists $1\leq v<t_1$ such that $a_v\in (f_2)$ and $a_v|c_{t_1}$.
 Then we may replace in $P_b$ the intervals $[a_{p},c_{p}], v \leq p \leq t_1$ with the intervals $[a_v,c_{t_1}],[a_{p+1},c_p], v \leq p < t_1$. The old $c_{t_1}$ becomes the new $c_v$, that is we reduce to the case when $u_2$ divides $c_v$ and $a_v\in (f_2)$, subcase solved above.

Remains to study the subcase when there exist no $a_v\in (f_2)$, $1\leq v\leq t_1$ with $a_v|c_{t_1}$. Then there exists an $a_{t_1+1}\in B\cap (f_2)$, $a_{t_1+1}\not= u_2$ such that $a_{t_1+1}|c_{t_1}$. Clearly, $a_{t_1+1}\not=u'_2$ because otherwise $c'_2=c_{t_1}$. We have two subcases:

$1')$ there exists a  path $a_{t_1+1},\ldots,a_l$ such that $h(a_l)\in (a_{v'})$ for some $1\leq v'\leq t_1$,

$2')$  for any  path  $a_{t_1+1},\ldots,a_p$, any $h(a_j)$, $t_1<j\leq p$ does not belong to $(a_1,\ldots,a_{t_1})$.

In the first subcase, we replace in $P_b$ the intervals $[a_{j},c_{j}], v' \leq j \leq l$ with the intervals $[a_{v'},c_{l}],[a_{j+1},c_j], v' \leq j < l$. The new $h(a_{t_1+1})$ is the old $c_{t_1}$ and we may proceed as above. In  the second subcase we set $$T_2=  \{b'\in B: \mbox{there\ \ exists\ \ a\ \ path\ \ from}\ \ a_{t_1+1}\ \  \mbox{to}\ \ b'\}.$$
 Note that any path starting from $a_{t_1+1}$ can be completed to a path from $a_1$ by adding  the monomials $a_1,\ldots,a_{t_1}$. Thus there exists no bad path starting with $a_{t_1+1}$, otherwise we can get one starting from $a_1$, which is false.

 If there exists no weak path starting with  $a_{t_1+1}$ then we proceed as in the first case with $T_2$ instead $T_1$. If there exists a weak path starting with $a_{t_1+1}$ then we proceed as above in case 2) with $T'_2$, or ${\tilde T}_2$ instead $T'_1$, or ${\tilde T}_1$, except in the  subcase $2')$ when we will define similarly a $T_3$ given by the paths starting with a certain $a_{t_2+1}$.  Note that the whole set $\{a_1,\ldots,a_{t_2}\}$ has different monomials. After
several such steps we must arrive in the case $p=t_m$ when $\{a_1,\ldots,a_{t_m}\}$ has different monomials and the subcase $2')$ does not appear. We end this case using $T_m$, or $T'_m$, or ${\tilde T}_m$ instead $T_1$, $T'_1$, or ${\tilde T}_1$. We should mention that if there exists $b_1\in T_m$ (or in $T'_m$, ${\tilde T}_m$) such that $h(b_1)\in (f_1)$ then changing $P_b$ as in case 1) we may suppose that $h(b_1)\not \in (w_{12})$ and $b_1\in T_m\cap (f_1)$. Thus we may consider the interval $[f_1,h(b_1)]$ disjoint of $[f_2,c'_2]$.

Consider  {\bf the subcase when there exist} $b_j\in  T_1$, $j=2,3$ such that $h(b_2)\in (u_2)$ and $h(b_3)\in (f_3)$ {\bf but} $h(T_1)\cap (u_3,u'_3)=\emptyset$. As above we may suppose that after several procedures we changed $P_b$ such that  $b_j\in (f_j)$ and the new $c'_2$ is  the old $h(b_2)$.
If $h(b_2)\not\in C_3\cup C_2$ then we may suppose that $h(b_2)\not \in(w_{12})$. As in the first case we may change $b_3$ such that $h(b_3)\not \in (w_{23})$. Indeed, the only problem could be if the old $h(b_3)\in \{u_3,u'_3\}$, which is not the case. We have no obstruction to change as usual $b_1$ such that $h(b_1)\not\in (w_{13})$ and so note that the interval $[f_2,h(b_2)]$ (resp. $[f_3,h(b_3)]$, or $[f_1,h(b_1)]$) has at most $w_{23}$ (resp. $w_{13}$, or $w_{12}$) from $W$. Thus the intervals $[f_j,h(b_j)]$, $j\in [3]$ are disjoint.

If $h(b_2)\in C_3$ then either $b_2=w_{23}$, or $w_{12}$. But $b_2\not =w_{23}$ because otherwise $h(w_{23})\in (u_2)$ contradicting $(*)$. Similarly, $b_2\not = w_{12}$. If $h(b_2)=w_{12}$ (resp. $h(b_2)=w_{23}$) then $b_2\not= w_{23}$ (resp. $b_2\not = w_{12}$) because otherwise we get a contradiction with $(*)$. Thus
$w_{12}$ (resp. $w_{23}$) is the only monomial of $W$ which belongs to $[f_2,h(b_2)]$. Choosing $b_3$ such that $h(b_3)\not \in (w_{23})$ (resp. $h(b_3)\not\in (w_{12})$) and $b_1$ such that $h(b_1)\not \in (w_{12})$ (resp. $h(b_1)\not\in (w_{13})$) we get disjoint the corresponding intervals.

Now consider  {\bf the subcase when there exist} $b_j\in  T_1$, $j=2,3$ such that $h(b_2)\in (u_2)$ {\bf and} $h(b_3)\in (u_3)$. If $h(b_2)\not\in (f_3)$ and $h(b_3)\not \in (f_2)$ then as above we may assume that with a different $P_b$, if necessary,  we may  reduce to the subcase when $b_j\in (f_j)$, $j=2,3$.
In general
this is not  simple because $h(b_2)$ as in Example \ref{bad}  can have no divisors from $B\cap (f_2)$, which are not in $\{u_2,u'_2,u_3,u'_3\}$ and there exist no  other $c\in U_1$ multiple of $u_2$. In such situation we are force to remain on the old $c'_2$ taking $T'_1=T_1\cup\{u_2,u'_2\}$ and $U'_1=U_1\cup \{c'_2\}$. If there exists a bad path starting on a divisor from $B\setminus \{u_2,u'_2,u_3,u'_3\}$ of $c'_2$ then we go to case 3). Otherwise, we should consider also the pathes starting with the divisors  of $c'_2$ from $B\setminus \{u_2,u'_2,u_3,u'_3\}$ completing $T'_1$ to ${\tilde T}_1$.
Note that because of $2')$ we may speak now about $T_m$ instead $T_1$.

Changing in $P_b$ the intervals $[f_j,c'_j]$,  $[b_j,h(b_j)]$, $j=2,3$ with $[f_j,h(b_j)]$, $[u'_j,c'_j]$, $j=2,3$ we may assume the new $c'_2,c'_3$ are in  $U_m=h(T_m)$ for some $m$ and the proof goes as above. If let us say  $h(b_2)\in (f_3)$  then we must be carefully since it is possible that the new intervals $[f_j,c'_j]$ could be not disjoint. A nice subcase  is for example when $h(b_2)$ is a least common multiple of $u_2,u_3$, which we study below.

 {\bf If} $w_{23}\in B$ then we {\bf we may suppose}  $u_2=w_{23}$. Indeed, if $w_{23}\not\in \{u_2,u'_2,u_3,u'_3\}$ and $u_2=x_pf_2$ for some $p\not\in \supp w_{23}$ then $h(b_2)=x_pw_{23}$. Since $b_2\not =u_2$ and $b_2\in (f_2)$ it follows that $b_2=w_{23}$. But this contradicts the property $(*)$.
 Suppose that $a_{t_1}=b_2$.  Then note that $h(b_2)=c_{t_1}=x_pw_{23}$ for some $p$ and it follows that $a_{t_1}=b_2=x_pf_2$ since $a_{t_1}\not = w_{23}=u_2$. Changing in $P_b$ the intervals $[f_2,c'_2]$,  $[b_2,h(b_2)]$ with $[f_2,h(b_2)]$, $[u'_2,c'_2]$,  we may assume the new $c'_2$ is in  $U_m$. We claim that $w_{23}$ is the only monomial from $B\cap W$ which is in $[f_2,c'_2]$. Indeed, $w_{12}$ could be another monomial from $B\cap W$ which  is present in the new $[f_2,c'_2]$. This could be true only if $a_{t_1}=w_{12}$. Thus $h(w_{12})=c_{t_1}\in (u_2)$ which is not possible again by $(*)$. The same procedure we use to include a new $c'_3$ in $U_m$. Since $u_2=w_{23}$ cannot be among
$u_3,u'_3$ we see that only $w_{13}$ could be among them. Suppose that $u_3=w_{13}$. Clearly the new $[f_3,c'_3]$ cannot contain  $w_{23}$. Choose as in the first case $b_1\in (f_1)$ such that $h(b_1)\not \in (w_{13})$ and the new intervals $[f_j,c'_j]$, $j\in [3]$ are disjoint.
If $w_{13}\not\in \{u_3,u'_3\}$ then we might have only $b_3=w_{13}$ and we may repeat the argument.

A problem could appear when the new $[f_j,c'_j]$, $j=2,3$ contain  $w_{12}$, $w_{13}$ because then we may not find $b_1$ as before. Note that this problem could
appear only when $w_{12},w_{13}\in \{u_2,u'_2,u_3,u'_3\}$ because of $(*)$. We will change the new $c'_2$ such that will not belong to $(f_1)$.
 Changing $P_b$  we may suppose  that $b_j\in (f_j)$, $j=2,3$ (again this change is not so simple as we saw above).
 We have $h(b_2)=c_{t_1}=x_pw_{12}$ for some $p$ and it follows that $a_{t_1}=b_2=x_pf_2$ since $a_{t_1}\not = w_{12}=u_2$. Suppose that $t_1>1$. Thus $a_{t_1}|c_{t_1-1}$ and we see that $c_{t_1-1}$ is not in $(f_1)$ because otherwise we get  $c_{t_1-1}=x_pw_{12}=c_{t_1}$, which is false.
{\bf If} $a_{t_1-1}\in (f_2)$ then  changing in $P_b$ the intervals $[a_{t_1},c_{t_1}]$, $[a_{t_1-1},c_{t_1-1}]$, $[f_2,c'_2]$ by $[f_2,c_{t_1-1}]$, $[u_2,c_{t_1}]$, $[u'_2,c'_2]$ we see that the new $c'_2$ is not in $(f_1)$ and belongs to $U_m$.
{\bf If} $w_{12}\in C$ then {\bf we get} $h(b_2)=w_{12}$ and the above argument works again, $c_{t_1-1}$ being the new $c'_2$.

{\bf When}
$a_{t_1-1}\not\in (f_2)$ {\bf  but} $u'_2|c_{t_1-1}$  we reduce the problem to the subcase when the path $\{a_1,\ldots,a_{t_1-1}\}$ goes from $a_1$ to $u'_2$ and now $u'_2\not\in (f_1)$. As above we may change $P_b$ such that the new $b_2=a_{t_1-1}\in (f_2)$ and the new $c'_2$, that is the old $c_{t_1-1}$ is not in $(f_1)$.

{\bf If}
$a_{t_1-1}\not\in (f_2)$, $u'_2\not |c_{t_1-1}$ but
 {\bf there exists} ${\tilde a}\in B\cap (f_2)$  a divisor  of $c_{t_1-1}$ then ${\tilde a}\not =u_2$ because otherwise we get $c_{t_1-1}=c_{t_1}$. Now we repeat the first part of the case 2).  If ${\tilde a}=a_v$ for some $1\leq v<t_1-1$ then
 changing in $P_b$ the intervals $[a_p,c_p]$, $v\leq p<t_1$ by $[a_v,c_{t_1-1}]$, $[a_{p+1},c_p]$,
 $v\leq p<t_1-1$ we see that the new $c_v$ (resp. $c_{v+1}$)  is the old $c_{t_1-1}$ (resp. $c_{t_1}$)). Now changing the intervals $[a_v,c_v]$, $[a_{v+1},c_{v+1}]$, $[f_2,c'_2]$ by $[f_2,c_v]$, $[w_{12},c_{v+1}]$, $[u'_2,c'_2]$ we see that the new $c'_2\not \in (f_1)$ and belongs to $ U_m$.
  If ${\tilde a}\not \in\{a_1,\ldots,a_{t_1}\}$ then we are  in one of the  above subcases  $1')$, $2')$ solved already.

We may use this argument to change $c'_j$, $j=2,3$  such that it is not in $(f_1)$ anymore, but as long as $h(b_j)\not = c_1$, that is the corresponding $t_1>1$. However, we may have  $h(b_j) = c_1$ only for one $j>1$, because if for instance  $h(b_3)=c_1$ then $c_1\in C_2\cup C_3$. If $c_1\in C_3$ then we see that $w_{23}\in B$ and $a_1=w_{23}$. But this contradicts $(*)$ because $h(w_{23})\in (u_2)$. If $c'_2\in C_2$ then $c'_2=w_{23}$ and either $a_1\in (f_2)$, or $a_1\in (f_3)$, that is $a_1$ cannot be $b_2$ and $b_3$ in the same time. Thus at least  one of the new $c'_j$, $j=2,3$ could be taken $\not \in (f_1)$. If let us say only  $c'_3\in (f_1)$ then choose $b_1\in T_1\cap (f_1)$ such that $h(b_1)\not\in (w_{13})$ as before. The interval $[f_1,h(b_1)]$ is disjoint from the other new constructed intervals, which is enough as we saw in case 1).

In {\bf the third case}, let $a_1,\ldots,a_{t_1}$ be a  bad path starting with $a_1$. Set $c_j=h(a_j)$, $j\in [t_1]$.
Then $c_{t_1}=bx_{l_1}$ and let us say $b=f_1x_i$.
 If $a_{t_1}\in (f_1)$ then changing in $P_b$ the interval $[a_{t_1},c_{t_1}]$ by $[f_1,c_{t_1}]$ we get a partition on $I/J$ with sdepth $d+2$. Thus we may assume that $a_{t_1}\not\in (f_1)$. If $f_1x_{l_1}\in \{a_1,\ldots,a_{t_1-1}\}$, let us say   $fx_{l_1}=a_v$, $1\leq v<t_1$ then we may replace in $P_b$ the intervals $[a_{p},c_{p}], v \leq p \leq t_1$ with the intervals $[a_v,c_{t_1}],[a_{p+1},c_p], v \leq p < t_1$. Now we see that we have in $P_b$ the interval $[f_1x_{l_1},f_1x_ix_{l_1}]$ and switching it with the interval $[f_1,f_1x_ix_{l_1}]$ we get a partition with sdepth $\geq d+2$ for $I/J$.

Thus we may assume that $f_1x_{l_1} \notin \{ a_1,...,a_{t_1} \}$. Now set $a_{t_1+1}=fx_{l_1}$. Let \\ $a_{t_1+1},\ldots,a_k$ be a path starting with  $a_{t_1+1}$ and set $c_j=h(a_j)$, $t_1<j\leq k$.  If $a_{p}=a_v$ for $v\leq t_1$, $p>t_1$ then  change in $P_b$ the intervals $[a_j,c_j], v \leq j \leq p$ with the intervals $[a_v,c_{p}],[a_{j+1},c_j], v \leq j< p$. We have in $P_b$ an interval $[f_1x_{l_1},f_1x_ix_{l_1}]$ and switching it to $[f_1,f_1x_ix_{l_1}]$ we get a partition  with sdepth $\geq d+2$ for $I/J$. Thus we may suppose that in fact  $a_{p}\not \in \{b,a_1,\ldots, a_{p-1}\}$ for any $p>t_1$ (with respect to any path starting with $a_{t_1+1}$).
 We have  three subcases:

$1'')$ there exist  no weak path and no  bad path starting with $a_{t_1+1}$,

$2'')$ there exists a  weak path starting with $a_{t_1+1}$ but no  bad path starts with $a_{t_1+1}$,

$3'')$ there exists a  bad path starting with $a_{t_1+1}$.

  Set $T_2=\{b'\in B: \mbox{there\  exists\ a\  path}\ \ a_{t_1+1},\ldots,a_{k}\ \ \mbox{with}\ \ a_k=b'\}$. We treat the subcases $1'')$, $2'')$ as the cases 1), 2) and find $I'$ generated by a subset of $\{f_1,f_2,f_3\}\cup G_2$, or $\{f_1,f_2,f_3\}\cup G'_2$, or $ \{f_1,f_2,f_3\}\cup {\tilde G}_2$,  where $G_2$, $G'_2$, ${\tilde G}_2$, are obtained from $T_2$ and as above $T'_2$, or ${\tilde T}_2$.

 In the subcase $3'')$,
let $a_{t_1+1},\ldots,a_{t_2}$ be a  bad path starting with $a_{t_1+1}$ and set $c_j=h(a_j)$ for $j>t_1$. We saw that the whole set $\{a_1,\ldots,a_{t_2}\}$ has different monomials. As above
$c_{t_2}=bx_{l_2}$ and  we may reduce to the case when $f_1x_{l_2}\not \in \{a_1,\ldots,a_{t_1}\}$. Set $a_{t_2+1}=f_1x_{l_2}$ and again we consider three subcases, which we treat as above. Anyway after
several such steps we must arrive in the case $p=t_m$ when either we may proceed as in the subcases $1'')$, $2'')$, or   $b|c_{t_m}$ and again a certain $f_1x_{l_m}$ is not among $\{a_1,\ldots,a_{t_m}\}$ and taking $a_{t_m+1}=f_1x_{l_m}$ there exist no  bad path starting with $a_{t_m+1}$. This follows since  we may reduce to the subcase when the set $\{a_1,\ldots,a_{t_m}\}$ has different monomials and so the procedures should stop for some $m$.
 Finally, using $$T_m=\{b'\in B: \mbox{there\  exists\ a\  path}\ \ a_{t_m+1},\ldots,a_{k}\ \ \mbox{with}\ \ a_k=b'\}$$ (resp. $T'_m$, or ${\tilde T}_m$) as $T_1$ (resp. $T'_1$, or ${\tilde T}_1$) above we are done.
\end{proof}

{\bf Proof of Theorem \ref{m}.}\ \ \ By Theorems \ref{po}, \ref{sh} we may suppose that $2r\leq s\leq q+r$ and we  may assume  that    $E$ contains only monomials of degrees $d+1$ by \cite[Lemma 1.6]{PZ}.  Apply induction on $|E|$. If $E=\emptyset$ we may apply Theorem \ref{pz}.  Suppose that $|E|>0$ and  $B\cap (f_1,\ldots,f_r)\not =\emptyset$, $r=2,3$, otherwise we get  $\depth_S I/J \leq d+1$ using \cite[Lemma 1.5]{PZ} applied to any $f_i$. We  may choose $b\in B\cap (f_1,f_2,f_3)$ which is not in $W$  if $r=2,3$ and $|B\cap (f_1,\ldots,f_r)|> 3\geq |B\cap W|$. However,  $|B\cap (f_1,\ldots,f_r)|<2r$ gives  $\depth_S(f_1,\ldots,f_r)/J\cap (f_1,\ldots,f_r)\leq d+1$ by Theorem \ref{sh} and    it follows that $\depth_SI/J\leq d+1$ using the Depth Lemma applied to the exact sequence
 $$0\to (f_1,\ldots,f_r)/J\cap (f_1,\ldots,f_r)\to I/J\to (E)/(J,f_1,\ldots,f_r)\cap (E)\to 0.$$
  Thus if $r=2,3$ we may  suppose to find $b\in B\cap (f_1,\ldots,f_r) \setminus W$.  Renumbering $f_i$ we may suppose that  $b\in (f_1)\setminus (f_2,\ldots,f_3)$.

 Apply induction on $r\leq 3$. Using Theorem \ref{pz} and induction  hypothesis on $|E|$ and $r$ apply \cite[Lemma 1.1]{PZ2}. Thus we may suppose that $C\subset ((f_1)\cap (f_2))\cup ((E)\cap (f_1,f_2))\cup (\cup_{a,a'\in E, a\not =a'}(a)\cap (a'))$,
 if $r=2$, or $C\subset (\cup_{i,j\in [3],i\not=j} (f_i)\cap (f_j))\cup ((E)\cap (f_1,f_2,f_3))\cup (\cup_{a,a'\in E, a\not =a'}(a)\cap (a'))$ if $r=3$.

Set $I'_b=(f_2,\ldots,f_r,B\setminus \{b\})$, $J'_b=I'_b\cap J$.  Clearly $b\not \in I'_b$ and so in the following exact sequence
$$0\to I'_b/J'_b\to I/J\to I/(J,I'_b)\to 0 $$
the last term has  depth $\geq d+1$. If the first term has sdepth $\leq d+1$ then it has depth $\leq d+1$ by induction hypothesis on $r$, case $r=1$ being done in Theorem \ref{pz}. Thus we may suppose that $\sdepth_SI'_b/J'_b\geq d+2$ and we may apply Lemmas \ref{ml1}, \ref{ml2}.
   Then we get either $\sdepth_SI/J\geq d+2$ contradicting our assumption, or
 there exists a nonzero   ideal $I'\subsetneq I$ generated by a subset $G$ of $ B$, or by $G$ and a subset of $\{f_1,f_2,f_3\}$  such that  $\sdepth_S I'/J'\leq d+1$ for  $J'=J\cap I'$ and $\depth_SI/(J,I')\geq d+1$.
  In the last case we see that  $\depth_S I'/J'\leq d+1$ by induction hypothesis on $r$, $|E|$, or by Theorem \ref{pz} and so $\depth_SI/J\leq d+1$ by the Depth Lemma applied to the following exact sequence
$$0\to I'/J'\to I/J\to I/(J,I')\to 0.\ \ \ \ \ \ \ \Box $$

The following bad example it is useful to illustrate somehow our proof.

\begin{Example} \label{bad} {
Let $n=6$, $r=3$, $d=1$, $f_i=x_i$ for $i\in [3]$, $E=\{x_4x_5,x_5x_6\}$,  $I=(x_1,x_2,x_3,E)$ and $J=(x_2x_4,x_3x_4,x_1x_2x_6,x_1x_3x_6,x_1x_4x_6,x_1x_5x_6,x_2x_3x_6,x_2x_5x_6,$\\ $x_3x_5x_6)$.  Then $B=\{x_1x_2,x_1x_3,x_2x_3,x_1x_4,x_1x_5,x_2x_5,x_3x_5\}\cup E$ and $$C=\{x_1x_2x_3,x_1x_2x_5,x_2x_3x_5,x_1x_3x_5,x_1x_4x_5,x_4x_5x_6\}.$$ Take $b=x_1x_4$ and $I_b=(x_2,x_3,x_1x_2,x_1x_3,x_1x_5,E)$, $J_b=I_b\cap J$. There exists a partition $P_b$ with sdepth $3$ on $I_b/J_b$ given by the intervals $[x_2,x_1x_2x_3]$, $[x_3,x_1x_3x_5]$, $[x_1x_5,x_1x_2x_5]$, $[x_2x_5,x_2x_3x_5]$, $[x_4x_5,x_1x_4x_5]$, $[x_5x_6,x_4x_5x_6]$. We have $c'_2=x_1x_2x_3$, $c'_3=x_1x_3x_5$ and $u_2=x_2x_3$, $u'_2=x_1x_2$, $u_3=x_3x_5$, $u'_3=x_1x_3$. Clearly, $u_2=w_{23}$.

Take $a_1=x_1x_5$, $c_1=x_1x_2x_5$, $a_2=x_2x_5$, $c_2=x_2x_3x_5$. The path $\{a_1,a_2\}$ is maximal weak because the  divisors from $B$ of $c_2$ are $a_2,u_2,u_3$. Then $T_1=\{a_1,a_2\}$ and we change
in $P_b$ as in the proof the intervals $[x_2,c'_2]$, $[a_2,c_2]$ by $[x_2,c_2]$, $[u'_2,c'_2]$. Thus the new $c'_2$ is the old $c_2$.
Now note that this new $c'_2$ is a multiple of $u_3$ and it is the only monomial from $h(T_1)$, which is a such multiple. Thus we had to take $u_3$ in the new $T'_1$, and $u'_3$ as well and certainly $c'_3$ is added to $h(T_1)$. Clearly, all divisors from $B$ of $c'_3$ are in $T'_1=T_1\cup \{u_3,u'_3\}$. But the former $u'_2$ divides $c_1$ and so should be added  to $T'_1$.  Thus we have $I'=(b,E)$, $J'=J\cap I'$ and  $I/(J,I')$  has a partition of sdepth $3$ given by the intervals $[x_2,c_2]$, $[x_3,c'_3]$, $[x_1,x_1x_2x_5]$. If $\sdepth_SI'/J'\geq 3$ then we get $\sdepth_SI/J\geq 3$, which is false. Otherwise,   $\sdepth_SI'/J'\leq 2$ and we get $\depth_SI'/J'\leq 2$ by \cite[Theorem 4.3]{P} and so  $\depth_SI/J\leq 2$ using the Depth Lemma.}
\end{Example}

\end{document}